 \newcommand{\res}{\operatorname{res}}
 \newcommand{\Alt}{\mathcal{A}}
 \newcommand{\sym}{\mathfrak{S}}
 \newcommand{\Res}{\operatorname{Res}}
 \newcommand{\Cen}{\operatorname{C}}
 \newcommand{\cyc}[1]{\langle\,#1\,\rangle}
 \newcommand{\C}{\mathbb{C}}
 \newcommand{\Z}{\mathbb{Z}}
 \newcommand{\Sym}{\mathfrak{S}}
 \newcommand{\Irr}{\operatorname{Irr}}
 \newcommand{\SI}{\operatorname{SI}}
 \newcommand{\la}{\lambda}
 \newcommand{\da}{\delta}
  \newcommand{\sa}{\sigma}
\newcommand{\tSym}{\widetilde{\Sym}}
\newcommand{\tAlt}{\widetilde{\Alt}}
\newtheorem{theorem}{Theorem}[section] 
\newtheorem{lemma}[theorem]{Lemma}     
\newtheorem{corollary}[theorem]{Corollary}
\newtheorem{proposition}[theorem]{Proposition}
\theoremstyle{definition}
\newtheorem{remark}[theorem]{Remark}
\title[]
{Basic sets for the double covering groups of the symmetric and alternating
groups in odd characteristic}
\author{Olivier Brunat}
\address{Universit\'e Paris-Diderot Paris 7\\ Institut de math\'ematiques de
         Jussieu -- Paris Rive Gauche\\ UFR de math\'e\-matiques\\ Case
7012\\ 75205 Paris Cedex 13\\
         France.}
\email{brunat@math.univ-paris-diderot.fr}
\author{Jean-Baptiste Gramain}
\address{Institute of Mathematics, 
University of Aberdeen, King's College \\
Fraser Noble Building, Aberdeen AB24 3UE, UK
}
\email{jbgramain@abdn.ac.uk}
\thanks{The second author was supported by the EPSRC grant
\emph{Combinatorial Representation Theory} EP/M019292/1}
\subjclass[2010]{Primary 20C30,\, 20C15; Secondary 20C20}
\begin{document}
\begin{abstract}
In this paper, following the methods of~\cite{BrGr}, we show that the
double covering groups of the symmetric and alternating groups have $p$-basic
sets for any odd prime $p$.
\end{abstract}
\maketitle

\section{Introduction}
\label{sec:intro}
Let $G$ be a finite group and $\mathcal C$ be a union of conjugacy
classes of $G$. For any class function $\alpha$ on $G$, define
$\res_{\mathcal C}(\alpha)=\widehat\alpha$ to be the class function on
$G$ that takes the same values as $\alpha$ on $\mathcal C$ and $0$
elsewhere.  A \emph{$\mathcal C$-basic set} of $G$ is a subset $b$ of
the set $\Irr(G)$ of complex irreducible characters of $G$ such that
$\widehat b=\{\widehat \chi\,|\,\chi\in b\}$ is a $\Z$-basis of the
$\Z$-module generated by $\widehat{\Irr}(G)=\{\widehat
\chi\,|\,\chi\in\Irr(G)\}$.

In~\cite[\S2.1]{BrGr3}, the authors develop a generalized modular theory
of $G$ with respect to $\mathcal C$. In particular, there is a partition
of $\Irr(G)$ into $\mathcal C$-blocks with respect to this $\mathcal
C$-structure. We can then analogously define the notion of $\mathcal
C$-basic set of any $\mathcal C$-block $B$.  Note that, if $b$ is a
$\mathcal C$-basic set of $G$ and $B$ is any $\mathcal C$-block of $G$,
then $b \cap B$ is a $\mathcal C$-basic set of $B$. Conversely, given a
$\mathcal C$-basic set for each $\mathcal C$-block of $G$, one recovers
a $\mathcal C$-basic set of $G$.

Let $p$ be a prime.  When $\mathcal C$ is the set of $p$-regular
elements of $G$, that is, elements whose order is prime to $p$, the
$\mathcal C$-modular theory of $G$ is the classical $p$-modular
character theory, and the notion of $\mathcal C$-basic set coincides
with that of \emph{$p$-basic set}.  One of the main challenges of
Brauer's theory is the determination of the $p$-decomposition matrix of
$G$, of which a first approximation can be obtained when one has a
$p$-basic set of $G$.

While it is unclear how to exhibit a $p$-basic set of $G$ in general,
Hiss conjectured that all finite groups do have $p$-basic sets. This
conjecture is true in the following cases: for $p$-soluble
groups~\cite[X.2.1]{Feit}, for certain finite reductive groups under
various assumptions (\cite{HissG2}, \cite{geck3D4}, \cite{GH},
\cite{GeckBS}, \cite{GeckBS3}, \cite{kltiep}, \cite{denoncin},
\cite{Br1}), and for symmetric and alternating groups
(\cite{James-Kerber}, \cite{BrGr}, \cite{BrGr2}, \cite{bessAn}).

The covering groups of the symmetric groups have a $2$-basic set
(see~\cite[Thm \,5.2]{bessolsson}). For $p$ odd, attempts have been made
in~\cite{BMO} and~\cite{ABO} to generalize the method of James and
Kerber, but these methods fail for $p\geq 7$. In this paper, we prove
the following.

\begin{theorem}
\label{thm:main}
The covering groups of the symmetric and alternating groups have
$p$-basic sets for any odd prime $p$.
\end{theorem}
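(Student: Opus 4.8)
The plan is to follow the strategy of~\cite{BrGr}, adapting it to the double covers $\tSym_n$ and $\tAlt_n$. The starting point is the observation that the irreducible characters of $\tSym_n$ split into the characters inflated from $\Sym_n$ (which already have a $p$-basic set by~\cite{James-Kerber}, \cite{BrGr}) and the genuine (spin) characters, which are the ones parametrized, up to association, by the partitions of $n$ into distinct parts via the Schur correspondence; similarly for $\tAlt_n$. Since $p$ is odd, the inflated part causes no trouble, and the whole problem reduces to producing a $p$-basic set inside the set of spin characters. So first I would recall the labelling of the spin characters $\langle\la\rangle$ (and their associates) by bar partitions $\la$, the decomposition of $\tSym_n$ into spin $p$-blocks in terms of bar-cores and bar-weights (the Morris–Humphreys combinatorics), and the description of the $p$-regular conjugacy classes of $\tSym_n$ — these are controlled by the parts of the corresponding partition being odd and/or prime to $p$, and here the oddness is automatic from $p$ odd, which is precisely why the odd-$p$ case is cleaner than one might fear.

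The heart of the argument is a triangularity statement: I would construct, for each bar partition $\la$, a "canonical" combinatorial object (a bar partition $\tla$, or more precisely a $\bar p$-regular bar partition obtained from $\la$ by a bar-analogue of the "first-column removal" / Glaisher-type map, or by iterating a reduction on the $\bar p$-quotient) together with a virtual spin character — typically an induced character from a parabolic-type subgroup $\tSym_{\mu_1}\times\cdots$ built from the $p$-parts of $\la$, suitably signed — whose restriction $\widehat{(\,\cdot\,)}$ to the $p$-regular classes, when expanded in the $\widehat{\langle\mu\rangle}$, is unitriangular with respect to a dominance-type order on bar partitions. Concretely: one shows that the reduced induced character is $\widehat{\langle\la\rangle} + (\text{combination of }\widehat{\langle\mu\rangle}\text{ with }\mu\vartriangleright\la)$, so that the set of $\widehat{\langle\la\rangle}$ indexed by the $\bar p$-regular bar partitions $\la$ is a $\Z$-basis. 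Keeping track of the associate characters and of the $2$-modular subtleties at the "middle" of the block (the distinction between $\la$ with $\ell(\la)$ even or odd, and whether $\langle\la\rangle$ is irreducible or splits over $\tAlt_n$) is where the bookkeeping lives; the Clifford theory between $\tSym_n$ and $\tAlt_n$ then transfers the $\tSym_n$-basic set to $\tAlt_n$, block by block, exactly as in~\cite{BrGr2}.

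For the inductive/structural step I would use the $\mathcal C$-block philosophy from the introduction: it suffices to exhibit a $p$-basic set of each spin $p$-block $B$ of $\tSym_n$ (resp. $\tAlt_n$), and then assemble. Within a block of bar-weight $w$ one can hope to set up an explicit bijection between the spin characters of $B$ and pairs (bar-core, $\bar p$-quotient data) and run the induction on $w$, peeling off one $p$-bar at a time and inducing from $\tSym_{n-p}\times\tSym_p$-type subgroups; the sign and central-character constraints (which $2$-cocycle the factors carry) must be matched, and the Morris recursion for spin characters provides the needed branching formula. I expect the main obstacle to be precisely this: proving the unitriangularity with correct signs in the presence of the $\pm$ ambiguity of spin characters and the exceptional behaviour of self-associate / split characters, i.e. verifying that the combinatorial reduction map on bar partitions is compatible with a genuine character-theoretic induction map and orders things consistently. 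Once that triangular system is established over $\Z$ (not just over a localization), the theorem follows for $\tSym_n$, and Clifford theory combined with the known basic set for $\Sym_n$ and $\Alt_n$ yields Theorem~\ref{thm:main} in full.
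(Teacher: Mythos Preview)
Your proposal takes a genuinely different route from the paper, and in fact the route you sketch is the one the paper explicitly avoids. You are proposing a direct unitriangularity argument: induce virtual spin characters from Young-type subgroups, expand on $p$-regular classes, and hope for a triangular change-of-basis indexed by $\bar p$-regular bar partitions. This is precisely the spin analogue of the James--Kerber method, and the paper's introduction points out that attempts to carry it over (\cite{BMO}, \cite{ABO}) break down for $p\geq 7$. The ``main obstacle'' you flag --- controlling signs and the self-associate/associate bookkeeping well enough to get an honest $\Z$-unitriangular system --- is not a bookkeeping nuisance but the actual point of failure of that method; you have correctly located the difficulty but not proposed anything to overcome it.

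The paper bypasses this entirely. Rather than working inside $\tSym_n$, it introduces an auxiliary group $\widetilde G=P\rtimes\widetilde L$ (essentially the double cover of $(\Z/p\Z\rtimes\Z/(p-1)\Z)\wr\Sym_w$) and a set $\mathcal C$ of classes meeting $\widetilde L$. A $\mathcal C$-basic set for $\SI(\widetilde G)$ is produced almost for free: it consists of the spin characters with $P$ in their kernel, i.e.\ those labeled by $\bar p$-quotients with $q^0=\emptyset$. The real work is then a \emph{perfect isometry} (adapted from Livesey's proof of Brou\'e's conjecture, extended beyond the abelian-defect range) between each spin $p$-block $B$ of $\tSym_n$ and $\SI(\widetilde G)$ or $\SI(\widetilde H)$, depending on $\sigma(B)$. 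Since perfect isometries send basic sets to basic sets, this transports the easy basic set of $\widetilde G$ to a $p$-basic set of $B$, namely the characters labeled by $\lambda$ with $\lambda^0=\emptyset$. The alternating case is handled by the same mechanism via $\widetilde H$ and a further perfect isometry from \cite{BrGr3}, not by Clifford descent from the $\tSym_n$ basic set as you suggest. Your reduction to spin blocks and your invocation of the Morris--Humphreys block description are correct, but from that point on the paper's argument is structurally different from yours and does not rely on any branching or dominance triangularity.
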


In~\cite{BrGr3} (see also~\cite{KOR}), the authors gave a notion of
perfect isometries, which generalizes to $\mathcal C$ the perfect
isometries introduced by Brou\'e in~\cite{Broue}. Such a perfect
isometry $I$ furthermore satisfies $I(\widehat\chi)=\widehat{I(\chi)}$
for all $\chi\in B$, whence in particular sends a basic set to a basic
set.

To prove Theorem~\ref{thm:main}, we follow the strategy of~\cite{BrGr}.
In Section~\ref{sec:deux}, we present groups $\widetilde G$ and
$\widetilde H$ and unions of conjugacy classes $\mathcal C$ of
$\widetilde G$ and $\mathcal D$ of $\widetilde H$, and construct a
$\mathcal C$-basic set of $\widetilde G$ (see
Theorem~\ref{thm:basicset1}) and a  $\mathcal D$-basic set of
$\widetilde H$ (see Theorem~\ref{thm:basicset2}).

In Section~\ref{sec:perfiso}, we adapt the perfect isometry constructed
by Livesey in~\cite{livesey}, where he proves Brou\'e's Abelian Defect
Perfect Isometry Conjecture, and we apply the methods and results
of~\cite{BrGr3}. This produces, for any $p$-block $B$ of a double covering
group of the symmetric or alternating groups, a perfect isometry between
$B$ and a $\mathcal C$-block of $\widetilde G$ or a $\mathcal D$-block of
$\widetilde H$, from which Theorem~\ref{thm:main} is deduced. 
Finally, Section~\ref{sec:cinq} is devoted to some further remarks. In
particular, we recover some results of Olsson on the Brauer characters
in $p$-blocks of covering groups ($p$ odd), and their number. 

\section{Background}
\label{sec:background}
Let $n$ be a positive integer.
We write 
$\widetilde{\Sym}_n$ for the double covering group of the symmetric group
$\sym_n$ defined by
$$\widetilde{\Sym}_n=\left\langle z,\, t_i,\,1\leq i\leq n-1\ |\
z^2=1,\,t_i^2=z,\,(t_it_{i+1})^3=z,\,(t_it_j)^2=z\ (|i-j|\geq 2)
\right\rangle.$$
The group $\tSym_n$ and its representation theory were first studied by
I. Schur in \cite{schur}, and, unless otherwise specified, we always
refer to \cite{schur} for details or proofs.

\noindent
We recall 
that we have the following exact sequence
$$1\rightarrow \cyc{z}\rightarrow \widetilde{\Sym}_n\rightarrow
\sym_n\rightarrow 1.$$ 

We denote by $\theta:\widetilde{\Sym}_n\rightarrow\sym_n$ the natural
projection. Note that for every $s\in\sym_n$, we have
$\theta^{-1}(s)=\{\widetilde{s},z\widetilde{s}\}$, where
$\widetilde{s}\in\widetilde{\Sym}_n$ is such that
$\theta(\widetilde{s})=s$.  Whenever $G$ is a subgroup of
$\sym_n$, we set $\widetilde G=\theta^{-1}(G)$.  Any irreducible
(complex) character of $\widetilde G$ with $z$ in its kernel is simply
lifted from an irreducible character of $G$.  Any other irreducible
character $\xi$ of $\widetilde G$ is called a {\emph{spin character}},
and it satisfies $\xi(z)=-\xi(1)$.  We denote by $\SI(\widetilde G)$ the
set of irreducible spin characters of $\widetilde G$.  Define
$\varepsilon=\operatorname{sgn}\circ \, \theta$, where
$\operatorname{sgn}$ is the sign character of $\sym_n$. In particular,
if $G$ is the alternating group $\Alt_n$, then
$\widetilde{\Alt}_n=\ker(\varepsilon)$. 

Note that the restriction of $\varepsilon$ to $\widetilde G$ (again
denoted by $\varepsilon$) is a linear (irreducible) character of
$\widetilde G$, and for any spin character $\xi$ of $\widetilde G$,
$\varepsilon\otimes\xi$ is a spin character (because
$\varepsilon\otimes\chi(z)=-\varepsilon\otimes\chi(1)$).  A spin
character $\xi$ is said to be {\emph{self-associate}} if $\varepsilon
\xi=\xi$. Otherwise, $\xi$ and $\varepsilon \xi$ are called
{\emph{associate characters}}. 

If $H_1$ and $H_2$ are subgroups of $\sym_n$ acting non-trivially on
disjoint subsets of $\{1,\ldots,n\}$ and $H=H_1\times H_2\leq \sym_n$,
then Schur defined in~\cite{schur} (see also~\cite{hump}) the
\emph{twisted central product} denoted by $\hat{\times}$ and such that
$\widetilde H=\widetilde H_1\hat{\times}\widetilde H_2\leq \tSym_n$.
Schur proved that there is a surjective map $\hat\otimes:\SI(\widetilde
H_1)\times\SI(\widetilde H_2) \rightarrow \SI(\widetilde H)$. Schur
explicitly described when two pairs of spin characters have the same
image $\chi\hat{\otimes}\psi$ under this map, and how to detect whether
$\chi\hat\otimes \psi$ is self-associate or not. Furthermore, we derive
from case 1 of \S2 and the proof of Theorem~2.4 of~\cite{hump} that

\begin{proposition}
\label{prop:humph}
Let $\widetilde H_1$ and $\widetilde H_2$ be as above and not contained
in the kernel of $\varepsilon$.
Let $\chi_1$ and $\chi_2$ be irreducible spin characters of $\widetilde
H_1$ and $\widetilde H_2$. Let $\tau_1\in \widetilde H_1$ and
$\tau_2\in\widetilde H_2$ be such that
$\varepsilon(\tau_1)=\varepsilon(\tau_2)=1$. Then 
\begin{enumerate}[(1)]
\item If $\chi_1$ (or $\chi_2$) is self-associate, then
$$(\chi_1\hat\otimes\chi_2)(\tau_1,\tau_2)=\chi_1(\tau_1)\chi_2(\tau_2).$$
\item If $\chi_1$ and $\chi_2$ are non-self-associate, then 
$$(\chi_1\hat\otimes\chi_2)(\tau_1,\tau_2)=2\chi_1(\tau_1)\chi_2(\tau_2).$$
\end{enumerate}
\end{proposition}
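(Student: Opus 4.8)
The plan is to reduce the statement to a short computation with explicit matrix realizations of $\chi_1$ and $\chi_2$, the point being that the ``twist'' built into the central product $\widetilde H=\widetilde H_1\hat{\times}\widetilde H_2$ is governed by $\varepsilon$ and therefore disappears on the ``even'' part $\ker(\varepsilon)$. Fix spin representations $\rho_i\colon\widetilde H_i\to\GL(V_i)$ affording $\chi_i$, so that $\rho_i(z)=-\mathrm{id}_{V_i}$ since $\chi_i$ is a spin character. First I would recall, from Schur~\cite{schur} and from case~1 of \S2 of~\cite{hump}, the commutation rule inside $\widetilde H$: for $h_1\in\widetilde H_1$ and $h_2\in\widetilde H_2$ one has $h_1h_2=z^{\,c(h_1,h_2)}h_2h_1$, where $c(h_1,h_2)=1$ if $\varepsilon(h_1)=\varepsilon(h_2)=-1$ and $c(h_1,h_2)=0$ otherwise (this follows from the relation $(t_it_j)^2=z$ for $|i-j|\ge 2$, extended multiplicatively). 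In particular $\tau_1$ and $\tau_2$ commute in $\widetilde H$, since $\varepsilon(\tau_1)=\varepsilon(\tau_2)=1$.

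Next I would write down, following the proof of Theorem~2.4 of~\cite{hump}, a model for a module affording $\chi_1\hat\otimes\chi_2$. Suppose first that $\chi_2$ is self-associate (the case where $\chi_1$ is self-associate being symmetric; if both are, either choice works). Then $\varepsilon\otimes\chi_2\cong\chi_2$, so there is $J\in\GL(V_2)$ with $J\rho_2(h_2)=\varepsilon(h_2)\rho_2(h_2)J$ for all $h_2$, and after rescaling $J^2=\mathrm{id}_{V_2}$. One checks directly that $W=V_1\otimes V_2$ becomes an $\widetilde H$-module under $(h_1,1)\mapsto\rho_1(h_1)\otimes J^{[\varepsilon(h_1)=-1]}$ and $(1,h_2)\mapsto\mathrm{id}_{V_1}\otimes\rho_2(h_2)$: the intertwining property of $J$ reproduces exactly the sign $(-1)^{c(h_1,h_2)}$ demanded by the commutation rule, $J^2=\mathrm{id}$ makes the $\widetilde H_1$-part multiplicative, and $(z,z)$ acts trivially; by~\cite{hump} this module affords $\chi_1\hat\otimes\chi_2$. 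If instead both $\chi_i$ are non-self-associate, I would replace $V_1$ by $U_1=V_1\oplus(\varepsilon\otimes V_1)$, which \emph{is} self-associate for $\widetilde H_1$ (with odd involution exchanging the two summands), and run the same construction with the roles of $\widetilde H_1$ and $\widetilde H_2$ interchanged; this produces a module $W\cong(V_1\otimes V_2)^{\oplus 2}$ affording $\chi_1\hat\otimes\chi_2$, irreducible precisely because neither $\chi_i$ is self-associate.

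It then remains only to evaluate on a pair $(\tau_1,\tau_2)$ with $\varepsilon(\tau_1)=\varepsilon(\tau_2)=1$. In the first case the exponent $[\varepsilon(\tau_1)=-1]$ vanishes, so $J$ does not appear and $(\tau_1,\tau_2)$ acts on $W=V_1\otimes V_2$ as $\rho_1(\tau_1)\otimes\rho_2(\tau_2)$; taking traces gives $(\chi_1\hat\otimes\chi_2)(\tau_1,\tau_2)=\chi_1(\tau_1)\chi_2(\tau_2)$. In the second case the two summands of $\rho_{U_1}(\tau_1)$ coincide, again because $\varepsilon(\tau_1)=1$, so $(\tau_1,\tau_2)$ acts on $W\cong(V_1\otimes V_2)^{\oplus 2}$ as $(\rho_1(\tau_1)\otimes\rho_2(\tau_2))^{\oplus 2}$, giving $2\chi_1(\tau_1)\chi_2(\tau_2)$.

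The genuinely delicate step is not the linear algebra above but the identification of these ad hoc models with Schur's intrinsic surjection $\hat\otimes\colon\SI(\widetilde H_1)\times\SI(\widetilde H_2)\to\SI(\widetilde H)$, and, in the non-self-associate case, the irreducibility of the doubled module — together with the fact that it is exactly self-associateness (rather than some other invariant) that decides whether the module is $V_1\otimes V_2$ or its double. This is precisely what is supplied by case~1 of \S2 and by the proof of Theorem~2.4 of~\cite{hump}, using the standing hypothesis that neither $\widetilde H_i$ lies in $\ker(\varepsilon)$, which is what makes the twist $c$ genuinely present. Accordingly, in the write-up I would quote those results for the module realization and for irreducibility, and keep the original content confined to the evaluation on even elements carried out above.
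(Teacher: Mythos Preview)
Your proposal is correct and follows exactly the route the paper indicates: the paper gives no proof beyond the sentence ``we derive from case~1 of \S2 and the proof of Theorem~2.4 of~\cite{hump}'', and what you have written is precisely an unpacking of those references, building Humphreys' explicit module for $\chi_1\hat\otimes\chi_2$ and then observing that on pairs $(\tau_1,\tau_2)$ with $\varepsilon(\tau_1)=\varepsilon(\tau_2)=1$ the twisting operator $J$ (respectively the doubling) contributes trivially. There is nothing to add.
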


Let $\mathcal{D}_n$ be the set of {\emph{bar partitions} of $n$,
\emph{i.e.} the partitions of $n$  with distinct parts.  Denote by
$\mathcal{D}^+_n$ (respectively $\mathcal{D}^-_n$) the subset of
$\mathcal{D}_n$ consisting of all partitions $\pi\in \mathcal{D}_n$ such
that the number of even parts of $\pi$ is even
(respectively odd).  In~\cite{schur}, Schur proved that the spin
characters of $\tSym_n$ are, up to association, labeled by
$\mathcal{D}_n$. More precisely, he showed that every
$\lambda\in\mathcal{D}^+_n$ indexes a self-associate spin character
$\xi_{\lambda}$, and every $\lambda\in\mathcal{D}^-_n$ a pair
$(\xi_{\lambda}^{+} , \xi_{\lambda}^-)$ of associate spin characters. 

The irreducible spin characters of $\tAlt_n$ are also labeled by the bar
partitions of $n$. If $\lambda\in\mathcal D_n^+$, then
$\Res_{\tAlt_n}^{\tSym_n}(\xi_{\lambda})=\zeta_{\lambda}^++\zeta_{\lambda}^-$
with $\zeta_{\lambda}^+,\,\zeta_{\lambda}^-\in\SI(\tAlt_n)$. 
If $\lambda\in\mathcal D_n^-$, then 
$\Res_{\tAlt_n}^{\tSym_n}(\xi_{\lambda}^+)=
\Res_{\tAlt_n}^{\tSym_n}(\xi_{\lambda}^-)=\zeta_{\lambda}\in\SI(\tAlt_n)$.
\smallskip

For any bar partition $\lambda=(\lambda_1>\cdots>\lambda_k>0)$ of $n$,
we set, as in the case of partitions, $|\lambda|=\sum \lambda_i$
and we define the {\emph{length}} $\ell(\la)$ of $\lambda$ by
$\ell(\lambda)=k$.  
Furthermore, we set
$\sigma(\lambda)=(-1)^{|\lambda|-\ell(\lambda)}$. With this notation, we
then have 
$\lambda\in\mathcal D_n^{\sigma(\lambda)}$ (see e.g.~\cite[p.\,45]{olsson}).

\smallskip
Let $p$ be an odd integer. 
To any $\lambda\in\mathcal D_n$, one can associate in a canonical way
its $\overline p$-core $\lambda_{(\overline p)}$ and its $\overline
p$-quotient $\lambda^{(\overline p)}$; see~\cite[p.\,28]{olsson}.
The $\overline p$-core $\lambda_{(\overline p)}$ is a bar partition of
$n-wp$, where $w$ is a non-negative integer called the $\overline
p$-weight of $\lambda$. The $\overline p$-quotient has the form
$\lambda^{(\overline p)}=(\lambda^0, \, \lambda^1, \, \ldots, \,
\lambda^{(p-1)/2})$, where $\lambda^0$ is a bar partition, the
$\lambda^i$'s are partitions for $1\leq i\leq (p-1)/2$, and the sizes of
the $\lambda_i$'s ($0\leq i\leq (p-1)/2$) add up to $w$. 
If we write, in analogy with bar partitions,
$\sigma(\lambda^{(\overline p)})=(-1)^{w-\ell(\lambda^0)}$, then we
obtain that \begin{equation}
\sigma(\lambda)=\sigma(\lambda_{(\overline{p})})
\sigma(\lambda^{(\overline{p})}).
\label{eq:sign}
\end{equation}

\medskip

\section{Some basic sets}
\label{sec:deux}
Let $w$ be a positive integer and $p$ be an odd prime.
In this section, we will focus on a group $\widetilde G$ which, when
$w<p$, is isomorphic to the normalizer of a Sylow $p$-subgroup of
$\tSym_{pw}$. We will recall the description of its irreducible
characters, and describe a union $\mathcal C$ of conjugacy classes 
and a $\mathcal C$-basic set for $\widetilde G$. 

Set $K=\Z/p\Z\rtimes \Z/(p-1)\Z$ and $G=K\wr\Sym_w$. Recall that $G$ is
a natural subgroup of $\Sym_{pw}$ and that $G=N\rtimes \Sym_w$, where
$N=K^w$.  Now, let $Q=(\Z/p\Z)^w\triangleleft N$ and
$L=\Z/(p-1)\Z\wr\Sym_w$. Then $G=Q\rtimes L$. Note that $|\widetilde
Q|=2p^w$. Thus, by Sylow's theorems, $\widetilde Q$ has a Sylow
$p$-subgroup $P$ of index $2$, whence normal and unique in $\widetilde
Q$. Since $\langle z\rangle$ is normal in $\widetilde Q$ and $\langle
z\rangle\cap P$ is trivial (as $p$ is odd),
we see that $\widetilde Q=\langle z\rangle\times P$ and that $P\simeq
Q$. Now take any $g\in\widetilde G$. Then, since $\widetilde
Q\triangleleft \widetilde G$, the group $gPg^{-1}$ is a Sylow
$p$-subgroup of $\widetilde Q$. Hence, $gPg^{-1}=P$ and $P$ is normal in
$\widetilde G$.  
Furthermore, as $Q\cap L=1$, we have $P\cap\widetilde L=1$ (because
$P\cap \widetilde L=P\cap(\widetilde Q\cap \widetilde L)=P\cap\langle
z\rangle =1$).
Hence
\begin{equation}
\label{eq:semi}
\widetilde G=\widetilde N\,\widetilde \Sym_w=P\rtimes \widetilde L. 
\end{equation}
The irreducible spin characters of $\widetilde G$ were constructed by
Michler and Olsson in~\cite{Michler-Olsson} using the Clifford's theory
(see for example~\cite[Chap\,6]{isaacs}) of $\widetilde G$ with respect to the
normal subgroup $\widetilde N$. The parametrization they obtain is as
follows.

\begin{proposition}
\label{prop:chargtilde} 
The irreducible spin characters of $\widetilde G$ can be labeled
explicitly by the $\overline p$-quotients of $w$. 
If $q=(q^0,q^1,\ldots,q^{(p-1)/2})$ is a $\overline p$-quotient of $w$,
then $q$ labels two associate characters $\psi_q^+$ and $\psi_q^-$ if
and only if $\sigma(q)=-1$; otherwise, $q$ labels a unique
self-associate character $\psi_q$.
\end{proposition}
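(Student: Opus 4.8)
\emph{The parametrisation, due to Michler and Olsson, is obtained by a two-stage Clifford-theoretic argument, and the plan is to reconstruct it.} First I would record the irreducible spin characters of a single factor $\widetilde K=\theta^{-1}(K)$. Applying Clifford theory to $\widetilde K$ with respect to the normal subgroup $\theta^{-1}(\Z/p\Z)=\cyc{z}\times\Z/p\Z$, whose $p$ irreducible spin characters are the linear characters $\psi\boxtimes\lambda_a$ ($a\in\Z/p\Z$, $\psi$ the non-trivial character of $\cyc{z}$), the $p-1$ of them with $a\neq 0$ form a single orbit under $\widetilde K/\theta^{-1}(\Z/p\Z)\cong\Z/(p-1)\Z$, hence induce a single irreducible spin character $\delta$ of degree $p-1$; as $\varepsilon$ is trivial on $\theta^{-1}(\Z/p\Z)$ (a $p$-cycle being an even permutation), $\varepsilon\otimes\delta=\delta$, so $\delta$ is self-associate. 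The $\widetilde K$-invariant character $\psi\boxtimes\lambda_0$ extends in $p-1$ ways to linear spin characters of $\widetilde K$, all inflated from $\widetilde K/(\Z/p\Z)$; since $\varepsilon$ is non-trivial there (a $(p-1)$-cycle being odd), these fall into $(p-1)/2$ associate pairs $\{\mu_i,\varepsilon\otimes\mu_i\}$. Thus $\SI(\widetilde K)$ has exactly $1+(p-1)/2$ association classes, and iterating Schur's surjection $\hat\otimes$ describes $\SI(\widetilde N)$, where $\widetilde N=\widetilde K\,\hat\times\cdots\hat\times\,\widetilde K$: up to association its members are the $\chi_1\hat\otimes\cdots\hat\otimes\chi_w$ with $\chi_j\in\SI(\widetilde K)$.

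Next I would carry out the Clifford theory of $\widetilde G$ over $\widetilde N$. Since $\widetilde G/\widetilde N\cong\Sym_w$ permutes the factors, the $\widetilde G$-orbit of $\chi_1\hat\otimes\cdots\hat\otimes\chi_w$ is governed by the multiset of association classes of the $\chi_j$, i.e.\ by the composition $w=w_0+w_1+\cdots+w_{(p-1)/2}$ recording how many of the $\chi_j$ lie in the class of $\delta$ and of $\mu_1,\dots,\mu_{(p-1)/2}$ respectively; the inertia subgroup of a representative is the preimage $\widetilde N\,\theta^{-1}(\Sym_{w_0}\times\cdots\times\Sym_{w_{(p-1)/2}})$, a twisted central product of the ``local'' covering wreath products $\theta^{-1}(K\wr\Sym_{w_i})$. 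By the Clifford correspondence and the multiplicativity of $\hat\otimes$, the spin characters of $\widetilde G$ lying over a fixed orbit are products, over $i$, of the spin characters of $\theta^{-1}(K\wr\Sym_{w_i})$ lying over $\delta^{\hat\otimes w_0}$ (for $i=0$) or over $\mu_i^{\hat\otimes w_i}$ (for $i\geq 1$). This is the crux: the standard Clifford-theoretic analysis of spin characters of twisted wreath products shows that, because $\delta$ is \emph{self-associate}, the pertinent extension restricts to a genuine spin character on the symmetric-group factor, so these characters are parametrised by $\SI(\tSym_{w_0})$, hence by the bar partitions of $w_0$; whereas because each $\mu_i$ is \emph{not} self-associate, the corresponding extension restricts there to an ordinary (sign-twisted) character, so these are parametrised by $\Irr(\Sym_{w_i})$, hence by the ordinary partitions of $w_i$. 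Assembling over all compositions of $w$ yields exactly the $\overline p$-quotients $q=(q^0,q^1,\dots,q^{(p-1)/2})$ of $w$, with $q^0\in\mathcal D_{w_0}$ and $q^i\partition w_i$.

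Finally I would settle the self-associate dichotomy. Tensoring by $\varepsilon$ commutes with induction, so $\varepsilon\otimes\psi_q$ is computed by tracking $\varepsilon$ through the construction: it is trivial on the $\theta^{-1}(\Z/p\Z)$-parts (so fixes $\delta^{\hat\otimes w_0}$), restricts to the sign character on the symmetric factor $\Sym_{w_0}$ (a transposition of two blocks of size $p$ being a product of $p$ transpositions, hence odd), and interchanges $\mu_i$ with $\varepsilon\otimes\mu_i$ on the remaining factors; feeding this into Proposition~\ref{prop:humph} and Schur's rules for the image and self-associateness of $\hat\otimes$, the various parities combine so that $\psi_q$ is self-associate precisely when $(-1)^{w-\ell(q^0)}=\sigma(q)=1$, and otherwise $q$ labels an associate pair $\psi_q^{+},\psi_q^{-}$, as asserted. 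I expect the main obstacle to be exactly this passage through the twisted central products and covering wreath products in the last two steps: it is controlled by the non-trivial $2$-cocycles underlying $\hat\otimes$, and pinning down both the ``self-associate block $\Rightarrow$ bar partitions, non-self-associate block $\Rightarrow$ ordinary partitions'' dichotomy and the precise way the sign $\sigma$ enters the association rule is where the real work lies.
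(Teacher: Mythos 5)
The paper does not actually prove Proposition~\ref{prop:chargtilde}: it records the parametrisation as a citation to Michler--Olsson and, for the self-associateness criterion, to a page of the de la Cruz--Navarro reference. So there is no in-text proof to compare against; what can be assessed is whether your reconstruction plausibly tracks the cited argument.

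It does, in outline. Your description of $\SI(\widetilde K)$ (one self-associate $\delta=\eta_0$ of degree $p-1$, induced from a nontrivial linear character of $\theta^{-1}(\Z/p\Z)$, and $(p-1)/2$ associate pairs of linear characters inflated from $\widetilde K/\langle\tau\rangle$) is exactly the inventory Michler--Olsson establish and which the paper re-uses in the proof of Proposition~\ref{prop:kernel}. The grouping of $\widetilde G$-orbits of $\SI(\widetilde N)$ by compositions $(w_0,\dots,w_{(p-1)/2})$ of $w$, and the identification of the inertia group as the twisted central product of covering wreath products, are also the right shape, and the intended dichotomy (self-associate slot $\fleche$ spin characters of $\tSym_{w_0}$ $\fleche$ bar partitions; non-self-associate slots $\fleche$ ordinary characters of $\Sym_{w_i}$ $\fleche$ partitions) is precisely how the $\overline p$-quotient $(q^0,q^1,\dots,q^{(p-1)/2})$ arises.

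The genuine gap, which you flag yourself, is that the two decisive claims are asserted rather than proved: (a) that over $\delta^{\hat\otimes w_0}$ the Clifford extension to $\Sym_{w_0}$ is governed by the nontrivial Schur cocycle (giving $\SI(\tSym_{w_0})$) while over $\mu_i^{\hat\otimes w_i}$ the cocycle is a coboundary (giving $\Irr(\Sym_{w_i})$); and (b) the bookkeeping that turns the parities of $\varepsilon$ on the various factors into the rule $\sigma(q)=(-1)^{w-\ell(q^0)}$. Point (a) in particular requires an actual cocycle computation in the relevant twisted central products, and the orbit bookkeeping is subtler than you indicate --- the paper's own proof of Proposition~\ref{prop:kernel} has to distinguish the cases $|q^0|>1$ and $|q^0|\leq 1$ with $w-|q^0|$ odd precisely because $\Omega_\psi$ and $\Omega_{\varepsilon\psi}$ may or may not coincide. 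Your outline is therefore a faithful roadmap to the Michler--Olsson argument rather than a self-contained proof, and the place you say ``the real work lies'' is indeed where it lies.
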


\noindent For the second statement, an explanation can be found for example
in~\cite[p.\,422]{jbnavarro}.

\begin{proposition}
\label{prop:kernel}
The spin characters of $\widetilde G$ which have $P$ in their kernel are
exactly those labeled by $\overline p$-quotients of the form
$q=(\emptyset,q^{1},\ldots,\,q^{(p-1)/2})$. 
\end{proposition}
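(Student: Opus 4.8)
The plan is to reduce the statement first to the normal subgroup $\widetilde N$ of $\widetilde G$, and then to the factors of the twisted central product $\widetilde N=\widetilde K_1\hat\times\cdots\hat\times\widetilde K_w$. Since $P\trianglelefteq\widetilde G$ by \eqref{eq:semi}, a spin character $\psi$ of $\widetilde G$ has $P$ in its kernel if and only if $\psi$ is inflated from $\widetilde G/P\cong\widetilde L$, equivalently if and only if $P$ is contained in the kernel of every irreducible constituent of $\Res^{\widetilde G}_{\widetilde N}\psi$; and since $\widetilde N\trianglelefteq\widetilde G$, Clifford theory says these constituents form a single $\widetilde G$-orbit, so (again using $P\trianglelefteq\widetilde G$) $P$ lies in the kernel of one of them exactly when it lies in the kernel of all of them. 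Thus, writing $\eta$ for an irreducible constituent of $\Res^{\widetilde G}_{\widetilde N}\psi_q$, it suffices to show that $P\le\ker\eta$ if and only if $q^0=\emptyset$. (This is well posed when $q$ labels an associate pair, since twisting by $\varepsilon$ — which is trivial on $P$ — does not change whether $P$ lies in a kernel.)

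Next I unwind $\widetilde N=\widetilde K_1\hat\times\cdots\hat\times\widetilde K_w$, where $\widetilde K_i$ is the preimage in $\tSym_{pw}$ of the $i$-th copy of $K$ and $P=P_1\times\cdots\times P_w$ with $P_i\cong\Z/p\Z$ the normal Sylow $p$-subgroup of $\widetilde K_i$. By Schur's surjection $\hat\otimes$ one may write $\eta=\chi_1\hat\otimes\cdots\hat\otimes\chi_w$ with $\chi_i\in\SI(\widetilde K_i)$. None of the $\widetilde K_i$ lies in $\ker\varepsilon$ (the relevant copy of $K$ contains a $(p-1)$-cycle, which is odd as $p$ is odd), while $\varepsilon$ is trivial on $P_i$ (its elements are products of $p$-cycles, hence even permutations). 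Iterating Proposition~\ref{prop:humph} therefore gives $\eta(\tau_1\cdots\tau_w)=c\prod_i\chi_i(\tau_i)$ for all $\tau_i\in P_i$, with $c$ a positive integer. Specialising all but one $\tau_i$ to $1$ then shows that $P\le\ker\eta$ if and only if $P_i\le\ker\chi_i$ for every $i$; the only ambiguity in the $\chi_i$, namely replacing some $\chi_i$ by $\varepsilon\chi_i$, does not affect this since $\varepsilon$ is trivial on $P_i$. At each stage of the iteration one checks that the factor not being stripped off still does not lie in $\ker\varepsilon$, which is clear since it contains some $\widetilde K_j$.

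It remains to match ``$P_i\le\ker\chi_i$ for all $i$'' with ``$q^0=\emptyset$'', and for this I recall the structure of $\SI(\widetilde K)$ underlying Proposition~\ref{prop:chargtilde} (see \cite{Michler-Olsson}). The quotient $\widetilde K_i/P_i$ is cyclic of order $2(p-1)$ — generated by the image of an order-$2(p-1)$ preimage in $\tSym_{pw}$ of a $(p-1)$-cycle — and its $p-1$ linear characters that are non-trivial on $z$ are exactly the spin characters of $\widetilde K_i$ with $P_i$ in their kernel; since $\varepsilon|_{\widetilde K_i}\ne1$ these split into $(p-1)/2$ associate pairs. As the degrees of the irreducible spin characters of $\widetilde K_i$ satisfy $\sum\xi(1)^2=|K|=p(p-1)$, there is exactly one further irreducible spin character, a self-associate $\xi^{\mathrm{exc}}$ of degree $p-1$, induced from the spin character of $\langle z\rangle\times P_i$ lying over a non-trivial character of $P_i$; this $\xi^{\mathrm{exc}}$ does not have $P_i$ in its kernel. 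Hence $P_i\le\ker\chi_i$ for all $i$ if and only if none of the $\chi_i$ is of type $\xi^{\mathrm{exc}}$. Finally, in the Michler--Olsson parametrisation the component $q^0$ of the $\overline p$-quotient $q$ is precisely the datum encoding the $\Sym_w$-orbit of the set of indices $i$ for which $\chi_i$ is of type $\xi^{\mathrm{exc}}$ — it is a single bar partition, rather than a pair of partitions, exactly because $\xi^{\mathrm{exc}}$ is self-associate — while $q^1,\dots,q^{(p-1)/2}$ record the indices carrying each of the $(p-1)/2$ associate-pair types. Therefore $q^0=\emptyset$ if and only if no $\chi_i$ is of type $\xi^{\mathrm{exc}}$, if and only if $P\le\ker\eta$, if and only if $P\le\ker\psi_q$, which is the assertion.

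The step I expect to be the main obstacle is this last identification: one must extract from \cite{Michler-Olsson} (together with \cite{hump} and \cite{jbnavarro}) the precise way in which $q^0$ is read off from the Clifford datum over $\widetilde N$ — concretely, that $|q^0|$ is the number of tensor factors whose constituent is the exceptional spin character $\xi^{\mathrm{exc}}$ — or else re-derive enough of their construction to make this explicit. The remaining steps are routine, the only care being the bookkeeping with the twisted central product and with the hypotheses of Proposition~\ref{prop:humph}.
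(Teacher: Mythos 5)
Your proof is correct and follows essentially the same approach as the paper: both reduce via Clifford theory to $\widetilde N$, exploit the twisted-central-product decomposition together with Proposition~\ref{prop:humph}, and use the Michler--Olsson description to identify $|q^0|$ with the number of exceptional ($\eta_0$-type) tensor factors. The only cosmetic difference is that you reduce to checking $P_i\le\ker\chi_i$ one factor at a time, whereas the paper directly compares the explicit values $\psi(1)$ and $\psi(\tau_1\cdots\tau_w)$ using $\eta_0(\tau)=-1$ and $\eta_j^{\pm}(\tau)=1$; the underlying computation is the same.
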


\begin{proof}
Take any $\psi\in\Irr(\widetilde G)$.
Since $P\leq \widetilde N$, to study the restriction of $\psi$ to $P$,
we will look at the restriction of $\psi$ to $\widetilde N$. On the
other hand, the restriction to $\widetilde N$ is given by Clifford's
theorem~\cite[Thm\, 6.2]{isaacs} which goes as follows. There exist a
unique $\widetilde G$-orbit~$\Omega_{\psi}$ of irreducible characters
of $\widetilde N$ and a unique positive integer $e_{\psi}$ such that
\begin{equation}
\label{eq:clifford}
\Res_{\widetilde N}^{\widetilde
G}(\psi)=e_{\psi}\,\sum_{\chi\in\Omega_{\psi}}\,\chi. 
\end{equation}
It follows from~(\ref{eq:clifford}), the facts that $\psi$ is a spin
character and that all $\chi\in\Omega_{\psi}$ take the same value on $z$
that $\Omega_{\psi}$ consists only of spin characters.
We will thus now recall the description given in~\cite{Michler-Olsson}
of the $\widetilde G$-orbits of spin characters
of $\widetilde N$.

The irreducible spin characters of $\widetilde N$ are of the form
$\chi_1\hat\otimes \cdots\hat\otimes \chi_w$, where
$\chi_i\in\SI(\widetilde K)$ for $1\leq i\leq w$. As above, we have $\widetilde K=\langle \tau\rangle\rtimes
\theta^{-1}(\Z/(p-1)\Z)$, where $\tau$ is an element or order $p$. 
In \cite{Michler-Olsson}, Michler and Olsson show that $\widetilde K$
has exactly $p$ spin characters: a unique self-associate $\eta_0$, of
degree $p-1$ and with $\eta_0(\tau)=-1$, 
and $(p-1)/2$ pairs of associate linear characters
$\eta_1^{+},\,\eta_1^{-},\ldots,\eta_{(p-1)/2}^{+},\,\eta_{(p-1)/2}^-$, 
which are exactly those having $\langle \tau\rangle$ in their kernel.  

For any decomposition $t=(t_0,\,t_1,\,\ldots,t_{(p-1)/2})$ of $w$, we
set
\begin{equation}
\label{eq:orbite}
\theta_t=\underbrace{\eta_0\hat\otimes\cdots\hat\otimes\eta_0}_{t_0\
\text{factors}}\hat\otimes
\underbrace{\eta_1^+\hat\otimes\cdots\hat\otimes\eta_1^+}_{t_1\
\text{factors}}\hat\otimes\cdots\hat\otimes
\underbrace{\eta_{(p-1)/2}^+\hat\otimes\cdots\hat\otimes\eta_{(p-1)/2}^+}_{t_{(p-1)/2}\
\text{factors}}\in\SI(\widetilde N).
\end{equation}

Now suppose $\psi$ is labeled by the $\overline p$-quotient
$q=(q^0,\ldots,q^{(p-1)/2})$ of $w$.
From~\cite[Prop\,3.12]{Michler-Olsson} and the construction of $\psi$,
we get that, if $|q^{0}|>1$, then
$\theta_{(|q^0|,\,\ldots,\,|q^{(p-1)/2}|)}$ is a representative for
$\Omega_{\psi}=\Omega_{\varepsilon\psi}$ (even if
$\psi\neq\varepsilon\psi$). If $|q^0|\leq 1$ and $w-|q^0|$
is odd, then $\psi\neq\varepsilon\psi$ and representatives for
$\Omega_{\psi}$ and $\Omega_{\varepsilon\psi}$ are given by
$\theta_{(|q^0|,\,\ldots,\,|q^{(p-1)/2}|)}$ and
$\varepsilon\theta_{(|q^0|,\,\ldots,\,|q^{(p-1)/2}|)}$ (not necessarily
in this order). In all cases, we write $\theta_{\psi}$ for the
representative of $\Omega_{\psi}$.

Note that $Q=\langle u_1\rangle\times\cdots\times\langle u_w\rangle$,
where $u_i$ has order $p$ for $1\leq i\leq w$. For $1\leq i\leq w$, let
$\tau_i$ be the unique element of order $p$ in $\theta^{-1}(\{u_i\})$.
Then $$P=\langle \tau_1\rangle\times\cdots\times\langle \tau_w\rangle.$$
In particular, $P\leq\tAlt_{pw}$.

We now have
$\psi(1)=e_{\psi}\,\sum_{\chi\in\Omega_{\psi}}\chi(1)=
e_{\psi}\theta_{\psi}(1)|\Omega_{\psi}|$.
Furthermore, iterating
Proposition~\ref{prop:humph} shows that, for any $\chi\in\Omega_{\psi}$,
there is a non-negative integer $\alpha_{\chi}$ such that
$\chi(1)=2^{\alpha_{\chi}}(p-1)^{|q^0|}$ and
$\chi(\tau_1,\ldots,\tau_w)=2^{\alpha_{\chi}}(-1)^{|q^0|}$. 
Since $\chi(1)$ does not depend on the choice of $\chi$ in
$\Omega_{\psi}$, neither does $\alpha_{\chi}$ which we thus denote by
$\alpha$. In particular, we have
$$\psi(1)=e_{\psi}2^{\alpha}(p-1)^{|q^0|}|\Omega_{\psi}|\quad\text{and}\quad
\psi(\tau_1\cdots\tau_w)=e_{\psi}2^{\alpha}(-1)^{|q^0|}|\Omega_{\psi}|.
$$
It follows that, if $q^0\neq\emptyset$, then
$\psi(\tau_1\cdots\tau_w)\neq\psi(1)$, whence $P$ is not contained in
$\ker(\psi)$. If $q^{0}=\emptyset$, then a similar computation shows
that, for any $(x_1,\ldots,x_w)\in P$, we have $\psi(x_1\cdots
x_w)=e_{\psi}2^{\alpha}|\Omega_{\psi}|=\psi(1)$, as required.
\end{proof}

We define $\mathcal C$ to be the union of conjugacy classes of
$\widetilde G$ which have a representative in $\widetilde L$.

\begin{theorem}
\label{thm:basicset1}
The set of irreducible spin characters of $\widetilde G$ is a union of
$\mathcal C$-blocks, with $\mathcal C$-basic set the set $B$ of spin
characters labeled by $\overline p$-quotients of the form
$q=(\emptyset,q^1,\ldots,q^{(p-1)/2})$. 
\end{theorem}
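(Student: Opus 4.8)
The plan is to first show that the set $\SI(\widetilde G)$ of all irreducible spin characters is a union of $\mathcal C$-blocks, and then to identify a $\mathcal C$-basic set within it. For the first point, recall that the $\mathcal C$-blocks partition $\Irr(\widetilde G)$ according to the generalized modular theory of~\cite{BrGr3}; since $z$ lies in every conjugacy class meeting $\langle z\rangle$ but $\widetilde L\cap\langle z\rangle=\{z\}\not\subseteq$ any class we need to control, the key observation is rather that a spin character and a non-spin character can never lie in the same $\mathcal C$-block. Indeed, $\varepsilon$ is trivial on $\widetilde L$ (since $\widetilde L$ maps into... actually one must check $L\subseteq\Alt_{pw}$ is false in general, so instead) the cleaner argument: every element of $\widetilde L$ has order prime to nothing special, but the central character / linear character $z\mapsto -1$ separates spin from non-spin characters, and $z\notin\mathcal C$ is not quite it either. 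The correct mechanism is that $\res_{\mathcal C}$ respects the decomposition coming from the linear character $\varepsilon$ and from the value on $z$: writing any class function supported on $\mathcal C$ and using that $\mathcal C\subseteq\widetilde L$ which contains $z$, one sees $\res_{\mathcal C}(\xi)(z)=-\xi(1)$ for spin $\xi$ and $=+\chi(1)$ for non-spin $\chi$, so no $\Z$-linear relation mixes the two types. Hence $\SI(\widetilde G)$ is a union of $\mathcal C$-blocks.

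Next I would compute $\res_{\mathcal C}$ on the spin characters explicitly enough to pin down a basis. By Proposition~\ref{prop:kernel}, the spin characters $\psi_q$ (or $\psi_q^\pm$) with $q=(\emptyset,q^1,\ldots,q^{(p-1)/2})$ are exactly those with $P$ in their kernel, hence they factor through $\widetilde G/P\simeq\widetilde L$; since $\mathcal C$ consists of the classes meeting $\widetilde L$, the restricted characters $\widehat{\psi_q}$ for $q$ of this special form are, up to the identification $\widetilde G/P\simeq\widetilde L$, nothing but the restrictions to all of $\widetilde L$ of the irreducible spin characters of $\widetilde L$ — in particular they are $\Z$-linearly independent because distinct spin characters of $\widetilde L$ are linearly independent class functions and $\mathcal C$ carries all the information on $\widetilde L$. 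So $\widehat B$ is a linearly independent set of the right cardinality, namely the number of irreducible spin characters of $\widetilde L\cong\widetilde G/P$.

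Then I would show $\widehat B$ spans $\Z\widehat{\SI(\widetilde G)}$. For this, take a spin character $\psi$ labeled by a $\overline p$-quotient $q=(q^0,q^1,\ldots,q^{(p-1)/2})$ with $q^0\neq\emptyset$, and show that $\widehat\psi$ lies in the $\Z$-span of the $\widehat{\psi_{q'}}$ with $q'$ of the special form. The natural idea is an inductive/triangularity argument with respect to $|q^0|$ (equivalently, the $\overline p$-weight carried outside the $0$-th component): one evaluates $\psi$ on $\mathcal C$ using the Clifford-theoretic formula $\Res^{\widetilde G}_{\widetilde N}(\psi)=e_\psi\sum_{\chi\in\Omega_\psi}\chi$ from~(\ref{eq:clifford}) together with the orbit description recalled in the proof of Proposition~\ref{prop:kernel}, and compares with the values of the $P$-in-kernel characters. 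Since elements of $\widetilde L$ act on $P$ and the value of $\psi$ on an element $x\in\widetilde L$ is governed by how the $\widetilde G$-orbit $\Omega_\psi$ of $\widetilde N$-characters behaves — and the part of $\theta_\psi$ coming from the $\eta_0$-factors (the $t_0=|q^0|$ copies) contributes to $\widehat\psi$ in a way that, on $\mathcal C$, matches a $\Z$-combination of characters attached to smaller $|q^0|$ — one gets a unitriangular transition matrix. Concretely, the class functions $\widehat{\psi_q}$ as $q$ ranges over all $\overline p$-quotients of $w$ should be expressible via a unitriangular (over $\Z$) change of basis in terms of the $\widehat{\psi_{q'}}$ with $q'$ of the special form together with class functions that vanish on $\mathcal C$, which is exactly the assertion that $B$ is a $\mathcal C$-basic set.

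The main obstacle is the last step: making the triangularity precise. One must choose the right partial order on $\overline p$-quotients (presumably refining by $|q^0|$, then by dominance on the individual components) and carry out the Clifford-theory bookkeeping — tracking the integers $e_\psi$, the orbit sizes $|\Omega_\psi|$, the powers $2^\alpha$, and the self-associate versus associate dichotomy governed by $\sigma(q)$ (Proposition~\ref{prop:chargtilde}) — to verify that, on the classes in $\mathcal C$, the "error terms" coming from $q^0\neq\emptyset$ are supported on strictly smaller quotients and that the leading coefficient is a unit $\pm1$. The self-associate/associate subtlety, and the factor-of-$2$ ambiguities coming from Proposition~\ref{prop:humph}, are where the argument requires genuine care; everything else is a formal consequence of Clifford theory and the already-established fact that $\widehat B$ is independent of the correct size.
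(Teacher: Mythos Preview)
Your first paragraph is on the right track but muddled; the paper's argument is simply that $z\in\widetilde L$ is central, so $\mathcal C=z\mathcal C$, and then for $\psi$ spin and $\chi$ non-spin one has
\[
2\langle\psi,\chi\rangle_{\mathcal C}
=\langle\psi,\chi\rangle_{\mathcal C}+\langle\psi,\chi\rangle_{z\mathcal C}
=\frac{1}{|\widetilde G|}\sum_{g\in\mathcal C}\bigl(\psi(g)+\psi(zg)\bigr)\overline{\chi(g)}=0,
\]
which is exactly the orthogonality needed to conclude that spin and non-spin characters lie in distinct $\mathcal C$-blocks. Your formulation in terms of ``no $\Z$-linear relation mixes the two types'' is not quite what is required (block separation is about orthogonality of $\mathcal C$-restrictions, not mere linear independence), though the underlying mechanism you identify is the correct one.

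The real divergence is in the second half. You propose a triangularity argument in $|q^0|$, tracking Clifford data $(e_\psi,|\Omega_\psi|,2^\alpha,\sigma(q))$ on $\mathcal C$, and you candidly flag this as the main obstacle. The paper bypasses all of this. Because $\widetilde G=P\rtimes\widetilde L$ and $\mathcal C$ is precisely the set of classes meeting $\widetilde L$, a general result already proved in~\cite[Lemma~4.1 and Prop.~4.2]{BrGr} says that the irreducible characters of $\widetilde G$ with $P$ in their kernel form a $\mathcal C$-basic set of \emph{all} of $\widetilde G$. Intersecting that basic set with the union of $\mathcal C$-blocks $\SI(\widetilde G)$ gives a $\mathcal C$-basic set for the spin characters, and Proposition~\ref{prop:kernel} identifies this intersection as exactly the set $B$ in the statement. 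No triangularity, no bookkeeping of orbit sizes or powers of $2$, is needed: the heavy lifting was done once and for all at the level of the semidirect-product structure, and the only spin-specific input is knowing which spin characters kill $P$. Your approach could in principle be pushed through, but as written it is incomplete precisely where the paper's argument is a one-line citation.
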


\begin{proof}
Let $\chi$ be a non-spin character of $\widetilde G$ and $\psi$ be a
spin character of $\widetilde G$. For any $g\in\widetilde G$, one
has $\chi(g)=\chi(zg)$ and $\psi(g)=-\psi(zg)$. Also, since
$z\in\widetilde L$, we have that $\mathcal C=z\mathcal C$. Thus,
writing, for any $A\subseteq \widetilde G$, $\langle
\psi,\,\chi\rangle_A=\frac 1{|\widetilde G|}\sum_{x\in
A}\psi(x)\overline{\chi(x)}$, we have
$$2|\widetilde{G}|\langle \psi,\chi\rangle_{\mathcal C}=
|\widetilde{G}|(\langle \psi,\chi\rangle_{\mathcal C} +\langle
\psi,\chi\rangle_{z\mathcal C})=\sum_{g\in\mathcal
C}(\psi(g)+\psi(zg))\overline{\chi(g)}=0.$$
This proves that the set of spin characters of $\widetilde G$ is a union
of $\mathcal C$-blocks (see~\cite[Prop 2.14]{BrGr3} and
\cite[\S1]{KOR}). 
By Lemma~4.1 and Proposition~4.2 of~\cite{BrGr}, 
the set of irreducible
characters of $\widetilde G$ with $P$ in their kernel is a $\mathcal
C$-basic set of $\widetilde G$. The intersection of this basic set with
the set of spin characters of $\widetilde G$, which is given by
Proposition~\ref{prop:kernel}, is thus a $\mathcal C$-basic set for the
set of spin characters of $\widetilde G$. 
\end{proof}

Let $H=G\cap \Alt_{pw}$. By Clifford's theory between $\widetilde G$ and
$\widetilde H$, the spin irreducible characters of $\widetilde H$ are
again labeled by the $\overline p$-quotients of $w$. Any $\overline
p$-quotient $q$ of $w$ labels two characters $\varphi_q^+$ and
$\varphi_q^-$ of $\widetilde H$ when $\sigma(q)=1$, and one character
$\varphi_q$ otherwise. Set $\mathcal D=\mathcal C\cap \widetilde H$.  

\begin{theorem}
\label{thm:basicset2}
The set of irreducible spin characters of $\widetilde H$ is a union of
$\mathcal D$-blocks, with $\mathcal D$-basic set the set $B'$ of spin
characters labeled by $\overline p$-quotients of the form
$q=(\emptyset,q^1,\ldots,q^{(p-1)/2})$. 
\end{theorem}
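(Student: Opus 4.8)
The plan is to run the proof of Theorem~\ref{thm:basicset1} again, with $\widetilde G$ replaced by $\widetilde H$, with $\widetilde L$ replaced by $\widetilde{L}_H:=\widetilde L\cap\widetilde H$, and with $\mathcal C$ replaced by $\mathcal D$, and then to transport the information about kernels from $\widetilde G$ to $\widetilde H$ via the Clifford correspondence already used to label the spin characters of $\widetilde H$. First I would record the structural analogues of~\eqref{eq:semi}: since each $u_i$ is a $p$-cycle and $p$ is odd, $Q\le\Alt_{pw}$, so $H=Q\rtimes L_H$ with $L_H=L\cap\Alt_{pw}$; combined with $P\le\tAlt_{pw}$ (established in the proof of Proposition~\ref{prop:kernel}) and $P\trianglelefteq\widetilde G$, this gives $P\trianglelefteq\widetilde H$ and, by comparing orders, $\widetilde H=P\rtimes\widetilde{L}_H$. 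Also $z\in\widetilde{L}_H$, since $\theta(z)=1\in L_H$.

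The next step is to show that the set of spin characters of $\widetilde H$ is a union of $\mathcal D$-blocks, exactly as in the proof of Theorem~\ref{thm:basicset1}. As $z$ is central in $\widetilde H$ and lies in $\widetilde{L}_H$, we have $z\mathcal D=\mathcal D$; hence, for a spin character $\psi$ and a non-spin character $\chi$ of $\widetilde H$, the same computation as there yields $2|\widetilde H|\langle\psi,\chi\rangle_{\mathcal D}=\sum_{g\in\mathcal D}(\psi(g)+\psi(zg))\overline{\chi(g)}=0$, because $\psi(zg)=-\psi(g)$. By \cite[Prop.~2.14]{BrGr3} (see also \cite[\S1]{KOR}) this shows that no $\mathcal D$-block of $\widetilde H$ contains both a spin and a non-spin character.

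The third step is to produce a $\mathcal D$-basic set of $\widetilde H$. Here I would first check that $\mathcal D$ is precisely the union of the conjugacy classes of $\widetilde H$ having a representative in $\widetilde{L}_H$, so that \cite{BrGr} applies verbatim. The inclusion $\supseteq$ is clear. For $\subseteq$, take $x\in\mathcal D=\mathcal C\cap\widetilde H$; as $\widetilde H\trianglelefteq\widetilde G$, the $\widetilde G$-class of $x$ lies in $\widetilde H$ and meets $\widetilde L$, hence meets $\widetilde L\cap\widetilde H=\widetilde{L}_H$, say in $\ell$. If $x$ is not $\widetilde H$-conjugate to $\ell$, then $\Cen_{\widetilde G}(\ell)\le\widetilde H$ and $x$ is $\widetilde H$-conjugate to $g_0\ell g_0^{-1}$ for any $g_0\in\widetilde L\setminus\widetilde{L}_H$ (such $g_0$ exist since $L\not\le\Alt_{pw}$), and $g_0\ell g_0^{-1}\in\widetilde{L}_H$. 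With this description of $\mathcal D$, the structure $\widetilde H=P\rtimes\widetilde{L}_H$ puts us in the situation of Lemma~4.1 and Proposition~4.2 of~\cite{BrGr}, which then give that the set of irreducible characters of $\widetilde H$ with $P$ in their kernel is a $\mathcal D$-basic set of $\widetilde H$.

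Finally — and this is the step I expect to require the most care — I would identify which spin characters of $\widetilde H$ have $P$ in their kernel. A spin character $\varphi$ of $\widetilde H$ labeled by a $\overline p$-quotient $q$ of $w$ lies, by construction of the labeling, under the spin character(s) of $\widetilde G$ labeled by $q$. Since $P\trianglelefteq\widetilde G$ and $P\le\widetilde H$, one has $P\le\ker\varphi$ if and only if $P\le\ker\psi$ for the spin character $\psi$ of $\widetilde G$ above $\varphi$: the ``if'' is immediate from $\varphi\le\Res^{\widetilde G}_{\widetilde H}\psi$, and for the ``only if'' one uses that $\Res^{\widetilde G}_{\widetilde H}\psi$ is a sum of $\widetilde G$-conjugates of $\varphi$, the kernel of each of which contains $gPg^{-1}=P$. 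By Proposition~\ref{prop:kernel} this happens exactly when $q=(\emptyset,q^1,\ldots,q^{(p-1)/2})$. Hence $B'$ is exactly the intersection of the $\mathcal D$-basic set of $\widetilde H$ found above with the (union of $\mathcal D$-blocks formed by the) spin characters of $\widetilde H$, and is therefore a $\mathcal D$-basic set of the latter, as claimed. The only genuine work, beyond quoting Theorem~\ref{thm:basicset1}, is the bookkeeping about splitting: the description of $\mathcal D$ in terms of $\widetilde{L}_H$-classes, and the Clifford-theoretic comparison of kernels over $\widetilde G$ and $\widetilde H$.
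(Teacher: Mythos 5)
Your proposal is correct and matches the paper's overall strategy (reduce to the setup of Theorem~\ref{thm:basicset1} via the decomposition $\widetilde H = P\rtimes\widetilde{L}_H$, then identify the spin characters of $\widetilde H$ with $P$ in their kernel). The one place where you diverge is the kernel-transfer step, and your route there is cleaner than the paper's. The paper splits into cases according to $\sigma(q)$: when $\sigma(q)=-1$ it uses $\ker(\Res^{\widetilde G}_{\widetilde H}\psi_q^+)=\ker\psi_q^+\cap\widetilde H$ exactly as you do, but when $\sigma(q)=1$ it writes $\Res^{\widetilde G}_{\widetilde H}\psi_q=\varphi_q^++\varphi_q^-$ and then invokes the bound $|\varphi_q^{\pm}(x)|\leq\varphi_q^{\pm}(1)$ (Isaacs, Lemma 2.15) to deduce that $\psi_q(x)=\psi_q(1)$ forces $\varphi_q^{\pm}(x)=\varphi_q^{\pm}(1)$. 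You replace this computation by a uniform Clifford-theoretic argument: since $P\trianglelefteq\widetilde G$, every $\widetilde G$-conjugate of $\varphi$ has the same intersection of its kernel with $P$, so $P\leq\ker\varphi$ if and only if $P\leq\ker(\Res^{\widetilde G}_{\widetilde H}\psi)=\ker\psi\cap\widetilde H$, and hence iff $P\leq\ker\psi$. This handles both signs of $\sigma(q)$ at once and avoids the character-value estimate. You also spell out, which the paper leaves implicit, that $\mathcal D$ is the union of $\widetilde H$-classes meeting $\widetilde L_H$; your splitting argument (using $\widetilde H\trianglelefteq\widetilde G$ and the existence of $g_0\in\widetilde L\setminus\widetilde{L}_H$, which holds since a $(p-1)$-cycle is odd) is right and is a hypothesis genuinely needed to invoke Lemma~4.1 and Proposition~4.2 of~\cite{BrGr} for $\widetilde H$. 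In short: same skeleton, a more conceptual kernel argument on your side, and a worthwhile extra verification that the paper glosses over.
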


\begin{proof}
Since $z$ lies in $\mathcal D$, we conclude as in
Theorem~\ref{thm:basicset1} that 
the set of irreducible spin characters of $\widetilde H$ is a union of
$\mathcal D$-blocks. Furthermore, a simple counting argument shows that
$$\widetilde H=P\rtimes (\widetilde L\cap \tAlt_{pw}).$$

Let $q=(q^0,q^1,\ldots,q^{(p-1)/2})$ be a $\overline p$-quotient of $w$.
If $\sigma(q)=-1$, then $\varphi_q=\Res_{\widetilde H}^{\widetilde
G}(\psi_q^+)$ has $P$ in its kernel if and only if $\psi_q^+$ does.  If
$\sigma(q)=1$, then $\Res_{\widetilde H}^{\widetilde
G}(\psi_q)=\varphi_q^++\varphi_q^-$, with
$\varphi_q^+(1)=\varphi_q^-(1)=\psi_q(1)/2$. Let $x\in P$. 
We have $\psi_q(x)=\psi_q(1)$ if and only if
$\varphi_q^+(x)+\varphi_q^-(x)=\psi_q(1)$; 
since, by~\cite[Lemma\,2.15]{isaacs}, $|\varphi_q^+(x)|\leq \psi_q(1)/2$ and 
$|\varphi_q^-(x)|\leq \psi_q(1)/2$, this holds if and only if
$\varphi_q^+(x)=\varphi_q^-(x)=\psi_q(1)/2$. 

This proves that any spin character of $\widetilde H$ labeled by $q$ has
$P$ in its kernel if and only if $q^0=\emptyset$, and we can conclude as
in the proof of Theorem~\ref{thm:basicset1}.
\end{proof}

\begin{remark}
\label{rk:brgrrevisited}
The same arguments as in the proofs of Theorems~\ref{thm:basicset1}
and~\ref{thm:basicset2} show that the set of characters of $G$ with $Q$
in their kernel is a $\theta(\mathcal C)$-basic set of $G$ (which is the
same as that found in~\cite{BrGr}), and  that the set of characters of
$H$ with $Q$ in their kernel is a $\theta(\mathcal D)$-basic set of $H$.
\end{remark}

\section{Perfect isometries and basic sets for the covering groups}
\label{sec:perfiso}

We start by recalling the definitions of perfect isometry and of
Brou\'e isometry, and we prove some of their properties which
will be useful later.
 
Let $G$ and $G'$ be two finite groups, and $\mathcal C$ and $\mathcal
C'$ unions of conjugacy classes of $G$ and $G'$ respectively. Let $B$
(resp. $B'$) be a union of $\mathcal C$-blocks of $G$ (resp. of
$\mathcal C'$-blocks of $G'$); see~\cite[p.\,4]{BrGr3}.  
An isometry $I:\C B\rightarrow\C B'$ such that
$I(\Z B)=\Z B'$ is
a perfect isometry if  
\begin{equation}
\label{eq:isoperf}
I\circ\res_{\mathcal C}=\res_{\mathcal C'}\circ I.
\end{equation}

\begin{remark}
\label{rk:transitivite}
If $I:\C B\rightarrow \C B'$ and $J:\C B'\rightarrow \C B''$ are 
perfect isometries, then it follows easily from~(\ref{eq:isoperf}) that
$J\circ I:\C B\rightarrow \C B''$ is a perfect isometry.  
\end{remark}

To $I$, we associate $\widehat I\in\C(B\times B')$ as
in~\cite[\S2.3]{BrGr3} which satisfies 
\begin{equation}
\label{eq:formI}
\widehat I=\sum_{\chi\in B}\overline{\chi}\otimes I(\chi).
\end{equation} 
Recall that (see~\cite[Eq\,(16)]{BrGr3}), for any $\psi\in\C\Irr(B)$ and 
$y\in G'$, we have
\begin{equation}
I(\psi)(y)=\frac{1}{|G|}\sum_{x\in G}\widehat{I}(x,y)\psi(x).
\label{eq:indgene}
\end{equation}

Using~(\ref{eq:formI}) and~(\ref{eq:indgene}), an easy computation shows
that, if $B''$ is a union of $\mathcal C''$-blocks of $G''$ and $J:\C
B'\rightarrow \C B''$ is a perfect isometry, then 
for any $(x,z)\in G\times G''$
\begin{equation}
\label{eq:chapeaucomp}
\widehat{J\circ I}(x,z)=\frac 1{|G'|}\sum_{y\in G'}\widehat
I(x,y)\widehat J (y,z). 
\end{equation}

If $p$ is a prime and $(K,\mathcal R,k)$ is a splitting
$p$-modular system for $G$ and $G'$, and if $\mathcal C$ and $\mathcal
C'$ are the sets of $p$-regular elements of $G$ and $G'$, then 
$I$ is a Brou\'e isometry if the following two properties are satisfied.
\begin{enumerate}[(i)]
\item For every $(x,x')\in G\times G'$, $\widehat{I}(x,x')$ lies in
$|\Cen_{G}(x)|\mathcal R\cap |\Cen_{G'}(x')|\mathcal R$.
\item If $\widehat I(x,x')\neq 0$, then $x$ and $x'$ are either both
$p$-regular or both $p$-singular.
\end{enumerate}
Note that, in this case, $I$ is also a perfect isometry in the above
sense since (ii) implies~(\ref{eq:isoperf}).

\begin{lemma}
\label{lem:transitivite}
If $I:\C B\rightarrow \C B'$ and $J:\C B'\rightarrow \C B''$ are Brou\'e
isometries, then $J\circ I:\C B\rightarrow \C B''$ is a Brou\'e
isometry.  
\end{lemma}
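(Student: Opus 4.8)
The plan is to verify the two defining conditions (i) and (ii) of a Brou\'e isometry for $J\circ I$, using the composition formula~\eqref{eq:chapeaucomp} together with Remark~\ref{rk:transitivite} (which already gives that $J\circ I$ is at least a perfect isometry, though we will re-derive what we need directly). Throughout, write $G$, $G'$, $G''$ for the three groups and let $(K,\mathcal R,k)$ be a splitting $p$-modular system for all of them.

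First I would check condition (ii), which is the easier one. Suppose $\widehat{J\circ I}(x,z)\neq 0$ for some $(x,z)\in G\times G''$. By~\eqref{eq:chapeaucomp} there must exist $y\in G'$ with $\widehat I(x,y)\neq 0$ and $\widehat J(y,z)\neq 0$. Applying condition (ii) for $I$ to the first and for $J$ to the second: $x$ and $y$ are either both $p$-regular or both $p$-singular, and $y$ and $z$ are either both $p$-regular or both $p$-singular. Since the $p$-regular/$p$-singular dichotomy on $y$ is the same in both statements, $x$ and $z$ fall on the same side of it, which is exactly condition (ii) for $J\circ I$.

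Next I would handle condition (i), which is the main obstacle since it requires an integrality/divisibility argument rather than a purely logical one. Fix $(x,z)\in G\times G''$; we must show $\widehat{J\circ I}(x,z)\in |\Cen_G(x)|\mathcal R\cap |\Cen_{G''}(z)|\mathcal R$. By symmetry it suffices to prove membership in $|\Cen_G(x)|\mathcal R$. From~\eqref{eq:chapeaucomp},
\[
\widehat{J\circ I}(x,z)=\frac{1}{|G'|}\sum_{y\in G'}\widehat I(x,y)\,\widehat J(y,z),
\]
and I would group the sum over $G'$-conjugacy classes: if $y_1,\ldots,y_r$ are class representatives, then $\frac{1}{|G'|}\sum_{y\in G'}\widehat I(x,y)\widehat J(y,z)=\sum_{j=1}^r \frac{1}{|\Cen_{G'}(y_j)|}\widehat I(x,y_j)\widehat J(y_j,z)$, using that $\widehat I(x,-)$ and $\widehat J(-,z)$ are class functions in the middle variable (this follows from~\eqref{eq:formI} since characters are class functions). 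Now apply condition (i) for $I$: $\widehat I(x,y_j)\in |\Cen_G(x)|\mathcal R$, so write $\widehat I(x,y_j)=|\Cen_G(x)|\,a_j$ with $a_j\in\mathcal R$. Also apply condition (i) for $J$ in the form $\widehat J(y_j,z)\in |\Cen_{G'}(y_j)|\mathcal R$, so $\widehat J(y_j,z)=|\Cen_{G'}(y_j)|\,b_j$ with $b_j\in\mathcal R$. Then each summand equals $\frac{1}{|\Cen_{G'}(y_j)|}\cdot |\Cen_G(x)|a_j\cdot |\Cen_{G'}(y_j)|b_j=|\Cen_G(x)|\,a_j b_j$, and since $\mathcal R$ is a ring, $\sum_j a_j b_j\in\mathcal R$; hence $\widehat{J\circ I}(x,z)\in|\Cen_G(x)|\mathcal R$. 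The symmetric argument, using $\widehat I(x,y_j)\in|\Cen_{G'}(y_j)|\mathcal R$ and $\widehat J(y_j,z)\in|\Cen_{G''}(z)|\mathcal R$, gives $\widehat{J\circ I}(x,z)\in|\Cen_{G''}(z)|\mathcal R$, completing condition (i). Finally, $J\circ I$ is an isometry with $(J\circ I)(\Z B)=\Z B''$ as the composite of two such maps, so all the requirements are met and $J\circ I$ is a Brou\'e isometry.

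The one point needing care — and the place I expect the referee to look hardest — is the passage from the sum over all of $G'$ to a sum over conjugacy classes and the simultaneous use of \emph{both} divisibility statements in condition (i) for each of $I$ and $J$: the cancellation of $|\Cen_{G'}(y_j)|$ is what makes the argument work, and it relies on picking the ``right'' half of condition (i) for each factor. Everything else is bookkeeping.
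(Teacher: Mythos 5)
Your proof is correct and follows essentially the same route as the paper: group the sum in~\eqref{eq:chapeaucomp} over conjugacy classes of $G'$ using that $y\mapsto\widehat I(x,y)\widehat J(y,z)$ is a class function, invoke $\widehat I(x,y)\in|\Cen_G(x)|\mathcal R$ and $\widehat J(y,z)\in|\Cen_{G'}(y)|\mathcal R$ to cancel the $|\Cen_{G'}(y)|$ factor for condition~(i), and for condition~(ii) observe that a nonvanishing composite forces some $y$ with both $\widehat I(x,y)\neq 0$ and $\widehat J(y,z)\neq 0$, then chain the regular/singular dichotomy through $y$. The factorization into $a_j$, $b_j$ is just a slightly more explicit way of writing what the paper packs into a single display.
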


\begin{proof}
Take any $x\in G$ and $z\in G''$. First note that $y\mapsto \widehat
I(x,y)\widehat J (y,z)$ is a class function of $G'$. Thus,
by~(\ref{eq:chapeaucomp}), 
$$\frac{\widehat{J\circ I}(x,z)}{|\Cen_{G}(x)|}=\sum_{y\in G'/\sim}
\frac{\widehat I(x,y)}{|\Cen_{G}(x)|}\frac{\widehat J(y,z)}{|\Cen_{G'}(y)|}\in\mathcal R.$$
A similar argument shows that $\widehat{J\circ I}(x,z)\in
|\Cen_{G}(z)|\,\mathcal R$, whence $\widehat{J\circ I}$ satisfies (i). 
Now suppose $\widehat{J\circ I}(x,z)\neq 0$. Then,
by~(\ref{eq:chapeaucomp}), there exists $y\in G'$ such that
$\widehat{I}(x,y)\neq 0$ and $\widehat{J}(y,z)\neq 0$; (ii) follows.
\end{proof}

\begin{remark}
\label{rk:invid}
Given a Brou\'e isometry $I$, a formula for $\widehat{I^{-1}}$ can be
found in~\cite[Remark~\,2.12]{BrGr3}. It easily implies that $I^{-1}$ is
a Brou\'e isometry. 
\end{remark}

\begin{theorem}
\label{thm:transitivite}
Let $p$ be an odd prime and $B$ be a $p$-block of $\tSym_n$ such that
$\{\xi_{\lambda}^+,\,\xi_{\lambda}^-\}\subseteq B$ for some
$\lambda\in\mathcal D_n^-$. 
Let $J:\C B\rightarrow\C B$ be the isometry given by
$J(\xi_{\lambda}^+)=\xi_{\lambda}^-$,
$J(\xi_{\lambda}^-)=\xi_{\lambda}^+$, and $J(\xi)=\xi$ if 
$\xi\notin\{\xi_{\lambda}^+,\,\xi_{\lambda}^-\}$.
Then $J$ is a Brou\'e isometry.
\end{theorem}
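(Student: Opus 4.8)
The plan is to verify the two defining conditions (i) and (ii) of a Brou\'e isometry directly from the formula~\eqref{eq:formI} for $\widehat J$. Write $G=G'=\tSym_n$ and $\mathcal C$ for the set of $p$-regular elements. Since $J$ fixes every irreducible character of $B$ except for swapping $\xi_\lambda^+$ and $\xi_\lambda^-$, plugging into~\eqref{eq:formI} and comparing with the corresponding expression for the identity isometry $\mathrm{id}_{\C B}$ gives, for all $x,y\in\tSym_n$,
\begin{equation*}
\widehat J(x,y)-\widehat{\mathrm{id}}(x,y)=\bigl(\overline{\xi_\lambda^+(x)}-\overline{\xi_\lambda^-(x)}\bigr)\bigl(\xi_\lambda^-(y)-\xi_\lambda^+(y)\bigr)=-\,\overline{\delta(x)}\,\delta(y),
\end{equation*}
where $\delta=\xi_\lambda^+-\xi_\lambda^-$ is the difference of the two associate spin characters. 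Moreover $\widehat{\mathrm{id}}(x,y)=\sum_{\chi\in B}\overline{\chi(x)}\chi(y)$ is the restriction to $B\times B$ of the standard class-sum formula, so it satisfies (i) and (ii) automatically because $\mathrm{id}_{\C B}$ is the restriction of the identity of $\C\Irr(\tSym_n)$, which is a Brou\'e isometry (its hat is $\sum_{\chi\in\Irr}\overline{\chi(x)}\chi(y)$, equal to $|\Cen(x)|$ if $x\sim y$ and $0$ otherwise). Thus everything reduces to understanding the single function $(x,y)\mapsto\overline{\delta(x)}\,\delta(y)$.

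The key input is the classical description of $\delta=\xi_\lambda^+-\xi_\lambda^-$ for $\lambda\in\mathcal D_n^-$: this difference character vanishes off the set of elements of $\tSym_n$ lying above permutations of cycle type $\lambda$ (all parts distinct), and on such an element it takes an explicit value of the form $\pm i^{(n-\ell(\lambda))/2}\sqrt{\lambda_1\lambda_2\cdots\lambda_k}$ (Schur's formula). In particular $\delta(x)$ is supported on a single conjugacy-class-type, on which it is a root of unity times the square root of a product of distinct integers. For condition (ii): if $\delta(x)\neq 0$ then $x$ projects to a permutation with all parts distinct. Since $\lambda\in\mathcal D_n^-$ has an odd number of even parts, in particular $\lambda$ contains at least one even part, so such an $x$ has even order --- hence is $p$-singular for the odd prime $p$ (and conversely this forces the pairing $\overline{\delta(x)}\delta(y)$ to be nonzero only when both $x$ and $y$ are $p$-singular). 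Combined with the fact that $\widehat{\mathrm{id}}$ already satisfies (ii), this gives (ii) for $\widehat J$.

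For condition (i) one must check that $\overline{\delta(x)}\,\delta(y)\in|\Cen_{\tSym_n}(x)|\,\mathcal R\cap|\Cen_{\tSym_n}(y)|\,\mathcal R$. Here the arithmetic of centralizer orders in $\tSym_n$ does the work: for $x$ above a permutation of cycle type $\lambda=(\lambda_1>\cdots>\lambda_k)$, the centralizer order in $\sym_n$ is $\lambda_1\cdots\lambda_k$, and $|\Cen_{\tSym_n}(x)|$ differs from this only by a factor of $2$ or $4$ that is a unit in $\mathcal R$ (as $p$ is odd). Thus $|\delta(x)|^2=\lambda_1\cdots\lambda_k$ is, up to a $p$-unit, exactly $|\Cen_{\tSym_n}(x)|$, so $\overline{\delta(x)}\delta(x)$ --- and hence each product $\overline{\delta(x)}\delta(y)$ after a Cauchy--Schwarz type factorisation, or more simply by noting $\delta(y)\in\mathcal R$ always and $\overline{\delta(x)}\sqrt{\lambda_1\cdots\lambda_k}\in|\Cen_{\tSym_n}(x)|\mathcal R$ --- lies in the required ideal. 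Adding back $\widehat{\mathrm{id}}$, which satisfies (i), we conclude $\widehat J$ satisfies (i). The main obstacle I anticipate is getting the $\mathcal R$-integrality bookkeeping in (i) exactly right: one needs Schur's explicit value of $\delta$ on the class of type $\lambda$ together with the precise relation between $|\Cen_{\sym_n}|$ and $|\Cen_{\tSym_n}|$ on spin-regular classes, and one must be careful that the root-of-unity prefactor $i^{(n-\ell(\lambda))/2}$ lies in $\mathcal R$ (it does, since $\mathcal R$ contains enough roots of unity for a splitting system, and $4\nmid$ issues are harmless as $p$ is odd). Once these two ingredients are in place, (i) and (ii) follow by the short computation above, and $J$ is a Brou\'e isometry.
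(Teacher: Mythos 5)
Your overall strategy is the same as the paper's: write $\widehat J - \widehat{\mathrm{id}} = -\overline{\delta(x)}\,\delta(y)$ with $\delta = \xi_\lambda^+ - \xi_\lambda^-$, invoke that the identity on $\C B$ is a Brou\'e isometry, and then analyse the support and values of $\delta$ using Schur's formula. However, two of your intermediate claims are not correct as stated.

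First, the assertion that $\mathrm{id}_{\C B}$ ``satisfies (i) and (ii) automatically because $\mathrm{id}_{\C B}$ is the restriction of the identity of $\C\Irr(\tSym_n)$'' is a genuine gap. The function $\widehat{\mathrm{id}_{\C B}}(x,y) = \sum_{\chi\in B}\overline{\chi(x)}\chi(y)$ is a partial sum of the full column-orthogonality sum and is \emph{not} simply $|\Cen(x)|$ on the diagonal; that it satisfies condition (ii) is precisely Brauer's block orthogonality, and (i) is the integrality of the block idempotent coefficients. These are nontrivial, and the paper cites \cite[Thm~4.21]{BrGr3} for exactly this point. Without that reference (or a direct appeal to Brauer's theory), your proof does not get off the ground.

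Second, the argument you give for (ii) on the correction term contains a false step: you claim that since $\lambda\in\mathcal D_n^-$ has an even part, any $x$ in the support of $\delta$ has even order and is ``hence $p$-singular for the odd prime $p$.'' Even order does not imply $p$-singularity for $p$ odd ($p$-singular means $p$ divides the order). The claim is not needed, though: if $\overline{\delta(x)}\delta(y)\neq 0$, then both $x$ and $y$ are conjugate to $t_\lambda$ or $zt_\lambda$, and since $z$ has order $2$ coprime to $p$, all four of these elements have the same $p$-regularity status. So $x$ and $y$ are simultaneously $p$-regular or simultaneously $p$-singular, which is what (ii) requires. This is effectively the case analysis the paper carries out. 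Your sketch of (i) --- that $\overline{\delta(x)}\delta(y)$, when nonzero, equals $\pm 2z_\lambda = \pm|\Cen_{\tSym_n}(x)| = \pm|\Cen_{\tSym_n}(y)|$ --- is correct and matches the paper's computation, though stating it directly via the explicit value is cleaner than the Cauchy--Schwarz-style detour you gesture at.
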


\begin{proof}
Recall that a set of conjugacy classes of $\tSym_n$ (the split classes)
outside which all spin characters vanish can be labeled by the elements
of $\mathcal O_n\cup\mathcal D_n^-$, where $\mathcal O_n$ is the set of
partitions of $n$ in odd parts. Each $\pi\in \mathcal O_n\cup\mathcal
D_n^-$ labels two conjugacy classes with representatives $t_{\pi}$ and
$zt_{\pi}$ such that the following hold.
For all $\pi\in\mathcal O_n$,
$\xi_{\lambda}^+(t_{\pi})=\xi_{\lambda}^-(t_{\pi})$ and
$\xi_{\lambda}^+(zt_{\pi})=\xi_{\lambda}^-(zt_{\pi})$. 
For all $\pi\in\mathcal D_n^-$,
$$\xi_{\lambda}^+(t_{\pi})=\xi_{\lambda}^-(zt_{\pi})=
-\xi_{\lambda}^+(zt_{\pi})=-\xi_{\lambda}^-(t_{\pi})=
\delta_{\pi\lambda}i^{(n-k+1)/2}\sqrt{z_\lambda/2},$$
where $\lambda=(\lambda_1>\cdots>\lambda_k>0)$ and
$z_{\lambda}=\lambda_1\cdots\lambda_k$. 

Denote by $I$ the identity on $\C B$. By~\cite[Theorem\, 4.21]{BrGr3},
$I$ is a Brou\'e isometry. First suppose $\pi=\pi'=\lambda$
and take $\nu\in\{0,\,1\}$. Then, by~(\ref{eq:formI}), 
\begin{align*}
\widehat J(t_{\lambda},z^{\nu}t_{\lambda})-
\widehat I(t_{\lambda},z^{\nu}t_{\lambda})
=&\left(\overline{\xi_{\lambda}^+(t_{\lambda})}\xi_{\lambda}^-(z^{\nu}t_{\lambda})+
\overline{\xi_{\lambda}^-(t_{\lambda})}\xi_{\lambda}^+(z^{\nu}t_{\lambda})\right)\\
&-\left({\xi_{\lambda}^+(t_{\lambda})}\xi_{\lambda}^+(z^{\nu}t_{\lambda})+
{\xi_{\lambda}^-(t_{\lambda})}\xi_{\lambda}^-(z^{\nu}t_{\lambda})\right).
\end{align*}

If $(n-k+1)/2$ is odd, then
$\overline{\xi_{\lambda}^+(t_{\lambda})}={\xi_{\lambda}^-(t_{\lambda})}$
and
$\overline{\xi_{\lambda}^-(t_{\lambda})}={\xi_{\lambda}^+(t_{\lambda})}$,
so that $
\widehat J(t_{\lambda},z^{\nu}t_{\lambda})-
\widehat I(t_{\lambda},z^{\nu}t_{\lambda})
=0$. Thus (i) and (ii) hold for $\widehat J$
(and $x=t_{\lambda}$, $x'=z^{\nu}t_{\lambda}$).\\

If $(n-k+1)/2$ is even, then
$\overline{\xi_{\lambda}^+(t_{\lambda})}={\xi_{\lambda}^+(t_{\lambda})}$
and
$\overline{\xi_{\lambda}^-(t_{\lambda})}={\xi_{\lambda}^-(t_{\lambda})}$,
so that 
\begin{align*}
\widehat J(t_{\lambda},z^{\nu}t_{\lambda})-
\widehat I(t_{\lambda},z^{\nu}t_{\lambda})
&=
\left( \xi_{\lambda}^+(t_{\lambda})-\xi_{\lambda}^-(t_
{\lambda})\right)\left( \xi_{\lambda}^-(z^{\nu}
t_{\lambda})-\xi_{\lambda}^+(z^{\nu}t_ {\lambda})  \right)\\
&=2\xi_{\lambda}^+(t_{\lambda})2\xi_{\lambda}^-(z^{\nu}t_{\lambda})\\
&=(-1)^{\nu +1}4\xi_{\lambda}^+(t_{\lambda})^2\\
&=\pm 2 z_{\lambda}.
\end{align*}
Since $2z_{\lambda}=|\Cen_{\tSym_n}(t_{\lambda})|$, this shows that (i)
holds for $\widehat J$
(and $x=t_{\lambda}$, $x'=z^{\nu}t_{\lambda}$).
Note that (ii) is automatic in this case since $t_{\lambda}$ and
$z^{\nu}t_{\lambda}$ are both $p$-regular or both $p$-singular.

Suppose now that $\pi,\,\pi'\in\mathcal O_n\cup\mathcal
D_n^-\backslash\{\lambda\}$ and $\nu,\,\nu'\in\{0,\,1\}$.
Then $$\widehat J(z^{\nu}t_\pi,\,z^{\nu'}t_{\pi'})=\widehat
I(z^{\nu}t_\pi,\,z^{\nu'}t_{\pi'}).$$
In particular, (i) and (ii) are true for $\widehat J$ in this case.

Finally, take $x\in\tSym_n$ such that exactly one of $x$ and
$t_{\lambda}$ is $p$-regular. Then $\widehat I(x,t_{\lambda})=0$. On the
other hand, 
\small
\begin{align*}
\widehat J(x,\,t_{\lambda})-
\widehat I(x,\,t_{\lambda})
&=
\left(\overline{\xi_{\lambda}^+(x)}\xi_{\lambda}^-(t_{\lambda})+
\overline{\xi_{\lambda}^-(x)}\xi_{\lambda}^+(t_{\lambda})\right)
-\left(\overline{\xi_{\lambda}^+(x)}\xi_{\lambda}^+(t_{\lambda})+
\overline{\xi_{\lambda}^-(x)}\xi_{\lambda}^-(t_{\lambda})\right)\\
&=
\left(\overline{\xi_{\lambda}^+(x)}-\overline{\xi_{\lambda}^-(x)}\right)
\left( \xi_{\lambda}^-(
t_{\lambda})-\xi_{\lambda}^+(t_ {\lambda})  \right),
\end{align*}
\normalsize
and $\overline{\xi_{\lambda}^+(x)}-\overline{\xi_{\lambda}^-(x)}=0$
since $x$ is conjugate neither to $t_\lambda$ nor to $zt_\lambda$.
Hence $\widehat J(x,t_\lambda)=0$, and (i) and (ii) hold in this case.
More generally, whenever $x\notin \{t_{\lambda},\,zt_\lambda\}$ and
$\nu\in\{0,\,1\}$,
$$\widehat J(x,z^{\nu}t_\lambda)=\widehat I(x,z^{\nu}t_\lambda),$$
whence (i) and (ii) hold. The result follows.
\end{proof}
 
From now on, we fix an integer $n\geq 2$ and an odd prime $p$.
If $B$ is any $p$-block of $\tSym_n$, then $B$ contains either no or
only spin characters. In the former case, $B$ coincides with a $p$-block
of $\Sym_n$; in the latter, we say that $B$ is a {\emph{spin block}}.
Similarly, any $p$-block of $\tAlt_n$ contains either no spin
character, and coincides with a $p$-block of $\Alt_n$, or only spin
characters, and is then called a spin block.
The spin blocks of $\tSym_n$ and $\tAlt_n$ are described by the
following:

\begin{theorem}[Morris~\cite{Morris},\,Humphreys~\cite{Humphreys-Blocks}]\label{thm:maurice}
Let $\chi$ and $\psi$ be two spin characters of $\tSym_n$, or two spin
characters of $\tAlt_n$, labeled by bar partitions $\la$ and $\mu$
respectively. Then $\chi$ is of $p$-defect
0 (and thus alone in its $p$-block) if and only if $\la$ is a $\overline
p$-core.
If $\la$ is not a $\overline p$-core, then $\chi$ and $\psi$ belong to the same
$p$-block if and only if $\la_{(\overline p)}=\mu_{(\overline p)}$.
\end{theorem}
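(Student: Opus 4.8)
The statement is classical, due to Morris~\cite{Morris} and Humphreys~\cite{Humphreys-Blocks}. The plan is to prove it for $\tSym_n$ and then transfer to $\tAlt_n$ by Clifford theory, so fix spin characters $\xi_\lambda$ and $\xi_\mu$ of $\tSym_n$. I would first dispose of the defect-zero claim. Recall the degree formula for $\xi_\lambda$ (an explicit product over the parts of $\lambda$ together with a power of $2$). Since $p$ is odd, $v_p(|\tSym_n|)=v_p(n!)$, and displaying $\lambda$ on a shifted (bar) abacus one computes $v_p(\xi_\lambda(1))$ bar-hook by bar-hook: it equals $v_p(n!)$ precisely when no bead can be slid up by $p$ positions, that is, when the $\overline{p}$-weight $w$ of $\lambda$ vanishes, i.e.\ $\lambda=\lambda_{(\overline{p})}$. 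As a character of $p$-defect $0$ is alone in its $p$-block, this settles the first assertion and shows that whenever $\xi_\lambda$ is not alone in its block, $\lambda$ is not a $\overline{p}$-core.

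For the block distribution I would argue with central characters. Fix a maximal ideal $\mathfrak{p}$ above $p$ in the ring of algebraic integers; then $\xi_\lambda$ and $\xi_\mu$ lie in the same $p$-block if and only if $\omega_{\xi_\lambda}(\widehat{C})\equiv\omega_{\xi_\mu}(\widehat{C})\pmod{\mathfrak{p}}$ for every $p$-regular class $C$, where $\omega_\xi(\widehat{C})=|C|\,\xi(t_C)/\xi(1)$ (by Osima's theorem the $p$-regular classes suffice). Since a spin character vanishes off the split classes, indexed by $\mathcal{O}_n\cup\mathcal{D}_n^-$, only the $p$-regular split classes matter, namely the $t_\pi$ with all parts of $\pi$ coprime to $p$. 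The engine is Morris's spin analogue of the Murnaghan--Nakayama rule: for a split class of the form $t_{\pi\cup(p^a)}$ it writes $\xi_\lambda(t_{\pi\cup(p^a)})$ as an alternating sum, with coefficients powers of $2$, of the values $\xi_\nu(t_\pi)$ with $\nu$ obtained from $\lambda$ by removing $a$ bars of length $p$. Iterating down to the $\overline{p}$-core and using the degree formula to check that the numerical factors picked up (powers of $2$, and the $\sqrt{z_\lambda/2}$-type terms arising on the $\mathcal{D}_n^-$-classes) are $p$-units, one concludes that modulo $\mathfrak{p}$ the value $\omega_{\xi_\lambda}(\widehat{C})$ depends only on $\lambda_{(\overline{p})}$, and that, as $C$ ranges over the $p$-regular split classes, the resulting function of $\lambda_{(\overline{p})}$ is injective (the core characters being of $p$-defect $0$, hence linearly independent on $p$-regular classes). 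Both implications of the second assertion for $\tSym_n$ follow.

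Finally, the $\tAlt_n$ case is deduced by Clifford theory: $\Res^{\tSym_n}_{\tAlt_n}\xi_\lambda$ is a sum of one or two irreducible spin characters, all lying in the $p$-block(s) of $\tAlt_n$ covered by the $p$-block of $\xi_\lambda$ in $\tSym_n$, and the labelling by $\overline{p}$-cores is compatible with restriction; the defect-zero statement transfers the same way, using $|\tAlt_n|_p=|\tSym_n|_p$ and $\zeta_\lambda(1)=\xi_\lambda^\pm(1)$, resp.\ $\zeta_\lambda^\pm(1)=\xi_\lambda(1)/2$.

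The hard part will be the second paragraph: the bookkeeping of the signs and powers of $2$ introduced by the twisted central product $\hat\otimes$ and by Morris's recursion, and checking that all these factors are invertible modulo $p$, so that the recursion genuinely reduces $\omega_{\xi_\lambda}(\widehat{C})\bmod\mathfrak{p}$ to the $\overline{p}$-core. Controlling the values on the $\mathcal{D}_n^-$-classes, where the extra factors $i^{(n-k+1)/2}\sqrt{z_\lambda/2}$ appear, is the most delicate point, and it is exactly there that the hypothesis that $p$ is odd gets used.
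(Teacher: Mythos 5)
This theorem is not proved in the paper: it is stated as a known result, attributed to Morris (who conjectured it) and Humphreys (who proved it), and used as an input for the rest of the argument. So there is no in-paper proof to compare your attempt against; the correct move, if you were writing this paper, would simply be to cite \cite{Morris} and \cite{Humphreys-Blocks} exactly as the authors do.

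As a sketch of the classical proof, your outline is broadly of the right shape but defers nearly all the substance to the final paragraph. The defect-zero claim via the bar-hook-length formula for $\xi_\lambda(1)$ is fine (the power of $2$ is a $p$-unit since $p$ is odd, so $v_p(\xi_\lambda(1))=v_p(n!)$ exactly when no bar length is divisible by $p$). Two places where the second part is more than bookkeeping: first, the injectivity step as phrased ("the core characters being of $p$-defect $0$, hence linearly independent on $p$-regular classes") does not by itself separate the congruence classes of central characters modulo $\mathfrak{p}$ — linear independence over $\mathbb{C}$ of the restricted characters is a different statement from distinguishability of $\omega_{\xi}(\widehat C)$ modulo a prime above $p$, and closing that gap is where the real content of the ``only if'' direction lies. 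Second, invoking ``Osima's theorem'' to restrict the central-character test to $p$-regular classes is not the right citation; the reduction comes from the fact that the block idempotents are supported on $p$-regular sections (a theorem of Osima is nearby, but it concerns the span of block characters on $p$-regular elements). Finally, it is worth flagging that the published proofs in fact take somewhat different routes — the ``if'' direction can be handled by explicit bar-removal/induction arguments and the ``only if'' direction via local methods (Brauer pairs, as in Cabanes's treatment) rather than a purely central-character computation — so presenting the whole theorem as a single Murnaghan--Nakayama iteration undersells its difficulty. None of this affects the paper, which simply cites the theorem.
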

One can therefore define the {\emph{$\overline p$-core}} of a spin block $B$ and
its {\emph{$\overline p$-weight}}, as well as its {\emph{sign}}
$\sa(B)=\sa( \la_{(\overline p)})$ (for any bar partition $\la$ labeling some
character $\chi \in B$).


\begin{theorem}
\label{thm:isoparfaite}
Let $B$ be any spin $p$-block of $\tSym_n$ of $\overline p$-weight
$w>0$. Let $\widetilde G$, $\widetilde H$, $\mathcal C$ and $\mathcal D$
be as in~\S\ref{sec:deux}, and let $\da_{\overline p}(\la)$ be the
{\emph{relative sign}} of a bar partition $\lambda$ 
introduced  by Morris and Olsson in
\cite{morrisolsson}. 

\begin{enumerate}[(i)]
 \item If $\sa(B)=1$, then $I:\C B\rightarrow\C\SI(\widetilde G)$
given by 
\begin{align*}
I(\xi_{\lambda})&=\delta_{\overline
p}(\lambda)(-1)^{|\lambda^0|}\psi_{\lambda^{(\overline p)}}\quad\text{if
}\sigma(\lambda)=1,\\
 I(\xi_{\lambda}^{\pm})&=\delta_{\overline
p}(\lambda)(-1)^{|\lambda^0|}\psi_{\lambda^{(\overline p)}}^{\pm}\quad\text{if
}\sigma(\lambda)=-1,
\end{align*}
is a perfect isometry with respect to $p$-regular elements of $\tSym_n$
and $\mathcal C$.
 \item If $\sa(B)=-1$, then $I:\C B\rightarrow\C\SI(\widetilde H)$
given by 
\begin{align*}
I(\xi_{\lambda})&=\delta_{\overline
p}(\lambda)(-1)^{|\lambda^0|}\varphi_{\lambda^{(\overline p)}}\quad\text{if
}\sigma(\lambda)=1,\\
 I(\xi_{\lambda}^{\pm})&=\delta_{\overline
p}(\lambda)(-1)^{|\lambda^0|}\varphi_{\lambda^{(\overline p)}}^{\pm}\quad\text{if
}\sigma(\lambda)=-1,
\end{align*}
is a perfect isometry with respect to $p$-regular elements of $\tSym_n$
and $\mathcal D$.
\end{enumerate}

\begin{proof}
(i) By~\cite[Thm\,10.1]{livesey}, the isometry $I_0:\C B_0\rightarrow\C\SI(\widetilde G)$
given by 
\begin{align*}
I_0(\xi_{\mu})&=\delta_{\overline
p}(\mu)(-1)^{w(p^2-1)/8+w+|\mu^0|}\psi_{\mu^{(\overline p)}}\quad\text{if
}\sigma(\mu)=1,\\
I_0(\xi_{\mu}^{\pm})&=\delta_{\overline
p}(\mu)(-1)^{w(p^2-1)/8+w+|\mu^0|}\psi_{\mu^{(\overline
p)}}^{\pm\delta_{\overline p}(\mu)(-1)^w}\quad\text{if
}\sigma(\mu)=-1,
\end{align*}
is a Brou\'e isometry when $w<p$ and $B_0$ is the principal spin
$p$-block of $\tSym_{pw}$. Note that, by definition of
$\sigma(\mu^0)$, the sign $\eta_{\overline
p}(\mu)$ in~\cite[Thm\,10.1]{livesey} does not in fact depend on
$\sigma(\mu^0)$.
Now, the proof of~\cite[Thm\,10.1]{livesey} carries forward to the case
where $w\geq p$ to show that $J$ is a perfect isometry  with respect to $p$-regular elements of $\tSym_n$
and $\mathcal C$. Using the fact that, if $J$ is perfect isometry, then
so is $-J$, we deduce that $I_1=(-1)^{w(p^2-1)/8+w}I_0$ is again a perfect
isometry with respect to $p$-regular elements of $\tSym_n$
and $\mathcal C$. 
By~\cite[Thm\,4.21]{BrGr3}, there exists a perfect isometry $I_2$ with
respect to $p$-regular elements between $B_0$ and $B$ ($I_2$ associates
characters of $B_0$ and $B$ labeled by bar partitions with the same
$\overline p$-quotient).
Furthermore, by composing iteratively perfect isometries of the type
described in Theorem~\ref{thm:transitivite}, by
Remark~\ref{rk:transitivite}, we construct a perfect isometry $I_3:\C
B\rightarrow \C B$ with
respect to $p$-regular elements and such that $I_3\circ I_2\circ
I_1^{-1}:\SI(\widetilde G)\rightarrow \C B$ given by
$\psi_{\lambda^{(\overline p)}}\mapsto \xi_{\lambda}$ if
$\sigma(\lambda)=1$  
and $\psi_{\lambda^{(\overline p)}}^{\pm}\mapsto \xi_{\lambda}^{\pm}$ if
$\sigma(\lambda)=-1$ is a perfect isometry (by composition and
inversion) with respect to $\mathcal C$ and $p$-regular elements of
$\tSym_n$.  
The result follows from the fact that $I=(I_3\circ I_2\circ
I_1^{-1})^{-1}$.

(ii) The proof is similar. We start from the Brou\'e isometry between
$\SI(\widetilde H)$ and the principal spin $p$-block $B_0^*$ of
$\tAlt_{pw}$ given in~\cite[Thm\,10.2]{livesey}, whose proof can be
generalized to the case $w\geq p$ to produce a perfect isometry with
respect to $\mathcal D$ and the set of $p$-regular elements of
$\tAlt_{pw}$. The analogue of the perfect isometry $I_2$ of case (i) is
the perfect isometry $B_0^*\rightarrow B$ given
in~\cite[Thm\,4.21]{BrGr3}. 
\end{proof}
\end{theorem}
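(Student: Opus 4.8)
The plan is to derive both statements by inverting a composite of three perfect isometries: one coming from Livesey's constructions in~\cite{livesey}, one ``straightening'' perfect isometry between spin blocks with the same $\overline p$-core provided by~\cite[Thm\,4.21]{BrGr3}, and one correcting the labelling of associate pairs, assembled from the Broué isometries of Theorem~\ref{thm:transitivite}. Throughout I would use Remark~\ref{rk:transitivite} (composites of perfect isometries are perfect), the fact that $-J$ is a perfect isometry whenever $J$ is, and the fact that the inverse of a perfect isometry is perfect (immediate from~(\ref{eq:isoperf})). Recall also that, by Theorem~\ref{thm:basicset1} (resp. Theorem~\ref{thm:basicset2}), $\SI(\widetilde G)$ is a union of $\mathcal C$-blocks (resp. $\SI(\widetilde H)$ a union of $\mathcal D$-blocks), so these are legitimate targets for perfect isometries with respect to $\mathcal C$ (resp. $\mathcal D$).

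For~(i), assume $\sa(B)=1$. The essential input is~\cite[Thm\,10.1]{livesey}: \emph{for $w<p$}, Livesey builds a Broué isometry $I_0\colon\C B_0\to\C\SI(\widetilde G)$, where $B_0$ is the principal spin $p$-block of $\tSym_{pw}$ (of $\overline p$-weight $w$ and sign $1$, in the notation following Theorem~\ref{thm:maurice}), given on the $\xi_\mu$ by an explicit formula involving the Morris--Olsson relative sign $\delta_{\overline p}(\mu)$ of~\cite{morrisolsson}, a global sign $(-1)^{w(p^2-1)/8+w+|\mu^0|}$, and a $\pm$-twist on the associate pairs. The point I would have to establish is that Livesey's verification of the perfect-isometry identity~(\ref{eq:isoperf}) for $I_0$ only uses the combinatorics of the split conjugacy classes of $\tSym_n$ and $\tSym_{pw}$ (the class labels $\mathcal O_n\cup\mathcal D_n^-$ and the character values recalled in the proof of Theorem~\ref{thm:transitivite}, together with~(\ref{eq:sign}) for the behaviour of $\overline p$-cores and quotients), and not the $p$-local structure — the hypothesis $w<p$ being needed only to upgrade the conclusion to a \emph{Broué} isometry (abelian defect). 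Granting this, the very same argument, read with $\widetilde G$ as in~\S\ref{sec:deux} (no longer a Sylow normaliser when $w\geq p$, but still with $\SI(\widetilde G)$ parametrised by the $\overline p$-quotients of $w$ via Proposition~\ref{prop:chargtilde}), shows that the analogue of $I_0$ is a perfect isometry with respect to $p$-regular elements of $\tSym_{pw}$ and $\mathcal C$ for every $w>0$; I would also record at this point that the relevant sign in~\cite[Thm\,10.1]{livesey} does not depend on $\sigma(\mu^0)$, so that the formula in~(i) is well posed. \emph{This extension step — unpacking the proof of~\cite[Thm\,10.1]{livesey} carefully enough to confirm that it survives the loss of abelian defect — is the main obstacle; everything else is bookkeeping.}

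The bookkeeping runs as follows. Rescaling by $(-1)^{w(p^2-1)/8+w}$ turns the extended $I_0$ into a perfect isometry $I_1\colon\C B_0\to\C\SI(\widetilde G)$ whose formula no longer involves that global sign. Since $B$ and $B_0$ are spin $p$-blocks of covering groups of symmetric groups with the same $\overline p$-weight $w$ and the same sign $\sa(B)=\sa(B_0)=1$, \cite[Thm\,4.21]{BrGr3} supplies a perfect isometry $I_2\colon\C B_0\to\C B$ with respect to $p$-regular elements, matching characters labelled by bar partitions with equal $\overline p$-quotient. The composite $I_2\circ I_1^{-1}\colon\C\SI(\widetilde G)\to\C B$ then realises the inverse of the map claimed in~(i) up to the labelling of the finitely many associate pairs $\{\xi_\lambda^+,\xi_\lambda^-\}\subseteq B$ (the $\pm$-twist carried by Livesey's formula and the $\pm$-conventions of~\cite[Thm\,4.21]{BrGr3} need not cancel). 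I would straighten these one pair at a time by composing with the sign-exchange Broué isometries of Theorem~\ref{thm:transitivite} — which are perfect isometries with respect to $p$-regular elements of $\tSym_n$ by that theorem — obtaining a perfect isometry $I_3\colon\C B\to\C B$ such that $I_3\circ I_2\circ I_1^{-1}$ is exactly the inverse of the claimed $I$. Then $I=(I_3\circ I_2\circ I_1^{-1})^{-1}$ is the required perfect isometry, by Remark~\ref{rk:transitivite}.

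For~(ii), one argues identically after replacing $\tSym_{pw}$, $\widetilde G$, $\mathcal C$ by $\tAlt_{pw}$, $\widetilde H$, $\mathcal D$: start from the Broué isometry $\C B_0^*\to\C\SI(\widetilde H)$ of~\cite[Thm\,10.2]{livesey} (valid for $w<p$, with $B_0^*$ the principal spin $p$-block of $\tAlt_{pw}$), extend its proof to $w\geq p$ exactly as above to obtain a perfect isometry with respect to $p$-regular elements of $\tAlt_{pw}$ and $\mathcal D$, and use the perfect isometry $\C B_0^*\to\C B$ furnished by~\cite[Thm\,4.21]{BrGr3} in place of $I_2$. The sign rescaling and the pair-labelling correction go through verbatim (again via Theorem~\ref{thm:transitivite}, since $B$ is still a $p$-block of $\tSym_n$), using that in $\widetilde H$ a self-associate $\overline p$-quotient $q$ still labels two characters $\varphi_q^\pm$, so the label-matching is completely parallel to~(i).
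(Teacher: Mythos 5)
Your proposal follows exactly the paper's argument: extend Livesey's Theorems 10.1/10.2 from the $w<p$ Broué-isometry case to a perfect isometry valid for all $w>0$, rescale by $(-1)^{w(p^2-1)/8+w}$, compose with the perfect isometry of \cite[Thm\,4.21]{BrGr3} matching $\overline p$-quotients, correct the $\pm$-labelling on associate pairs by iterating the sign-exchange Broué isometries of Theorem~\ref{thm:transitivite}, and invert. You even flag the same two delicate points (extension past abelian defect, and independence of Livesey's sign from $\sigma(\mu^0)$), so this is the same proof.
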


\begin{remark}
\label{rk:broue}
If $w<p$, then the same proof, but using Lemma~\ref{lem:transitivite},
Remark~\ref{rk:invid} and Theorem~\ref{thm:transitivite}, shows that the
isometries given in Theorem~\ref{thm:isoparfaite} are Brou\'e
isometries.
\end{remark}

\begin{corollary}
\label{cor:basic}
If $B$ is any spin $p$-block of $\tSym_n$ or of $\tAlt_n$ of $\overline
p$-weight $w$, then the set of characters of $B$ labeled by bar
partitions $\lambda$ such that $\lambda^{0}=\emptyset$ is a $p$-basic
set of $B$.
\end{corollary}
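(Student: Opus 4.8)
The plan is to reduce everything to the perfect isometries produced in Theorem~\ref{thm:isoparfaite} together with the explicit $\mathcal C$- and $\mathcal D$-basic sets of Theorems~\ref{thm:basicset1} and~\ref{thm:basicset2}. Since $p$ is odd and we are concerned with a \emph{spin} $p$-block $B$, Theorem~\ref{thm:maurice} tells us $B$ has a well-defined $\overline p$-core and $\overline p$-weight $w$, and (since $B$ is spin) $w>0$; if $w=0$ then $B$ consists of a single character of $p$-defect zero, which is trivially a $p$-basic set of itself and the statement is vacuous, so we may assume $w>0$. The sign $\sa(B)\in\{+1,-1\}$ is also defined. I would first handle the case $G=\tSym_n$: when $\sa(B)=1$, Theorem~\ref{thm:isoparfaite}(i) gives a perfect isometry $I:\C B\rightarrow \C\,\SI(\widetilde G)$ with respect to the $p$-regular elements of $\tSym_n$ and $\mathcal C$; when $\sa(B)=-1$, Theorem~\ref{thm:isoparfaite}(ii) gives a perfect isometry $I:\C B\rightarrow \C\,\SI(\widetilde H)$ with respect to $p$-regular elements and $\mathcal D$. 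In the $\tAlt_n$ case one argues identically, using instead the perfect isometry between a spin $p$-block of $\tAlt_n$ and $\SI(\widetilde G)$ or $\SI(\widetilde H)$ obtained by composing $I$ above (for the corresponding block of $\tSym_n$) with the restriction/Clifford-theory perfect isometry — or more directly one observes that the proof of Theorem~\ref{thm:isoparfaite} already produces such an isometry for any spin block of $\tSym_n$ or $\tAlt_n$ of weight $w>0$. So in every case we get a perfect isometry $I$ from $\C B$ onto $\C\,\SI(\widetilde G)$ or $\C\,\SI(\widetilde H)$ with respect to the $p$-regular elements on the $B$ side and $\mathcal C$ (resp.\ $\mathcal D$) on the other side.

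The key point is the remark made in the excerpt just after Theorem~\ref{thm:main}: a perfect isometry $I$ satisfies $I(\widehat\chi)=\widehat{I(\chi)}$ for all $\chi\in B$, where $\widehat{\phantom\chi}$ denotes $\res$ with respect to the relevant union of classes. Concretely, this follows from~\eqref{eq:isoperf} applied to $\widehat\chi=\res_{\mathcal C}(\chi)$ together with the fact that $I$ restricts to a $\Z$-module isomorphism $\Z B\to\Z B'$ intertwining $\res$. Consequently $I$ maps a $p$-basic set of $B$ to a $\mathcal C$- (resp.\ $\mathcal D$-) basic set of $\SI(\widetilde G)$ (resp.\ $\SI(\widetilde H)$), and conversely $I^{-1}$ maps a $\mathcal C$- (resp.\ $\mathcal D$-) basic set back to a $p$-basic set of $B$: indeed $\widehat{\Irr}(B)$ is sent by $I$ onto $\widehat{\SI(\widetilde G)}$ (resp.\ $\widehat{\SI(\widetilde H)}$), and since $I$ is a $\Z$-isomorphism on $\Z B$, a subset $b\subseteq B$ gives a $\Z$-basis $\widehat b$ of $\Z\widehat{\Irr}(B)$ if and only if $I(\widehat b)=\widehat{I(b)}$ is a $\Z$-basis of the image. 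Now take for the basic set on the target side the set $B$ of Theorem~\ref{thm:basicset1} (resp.\ $B'$ of Theorem~\ref{thm:basicset2}), namely the spin characters labeled by $\overline p$-quotients of the form $q=(\emptyset,q^1,\ldots,q^{(p-1)/2})$.

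It remains to identify $I^{-1}$ of this set as precisely the characters of $B$ labeled by bar partitions $\lambda$ with $\lambda^0=\emptyset$. This is immediate from the explicit formulas in Theorem~\ref{thm:isoparfaite}: $I(\xi_\lambda)$ (or $I(\xi_\lambda^\pm)$) is, up to a nonzero scalar, the character $\psi_{\lambda^{(\overline p)}}$ (or $\varphi_{\lambda^{(\overline p)}}$, or $\psi_{\lambda^{(\overline p)}}^\pm$, $\varphi_{\lambda^{(\overline p)}}^\pm$) of $\widetilde G$ (resp.\ $\widetilde H$) labeled by the $\overline p$-quotient $\lambda^{(\overline p)}=(\lambda^0,\lambda^1,\ldots,\lambda^{(p-1)/2})$, and this quotient has first component empty exactly when $\lambda^0=\emptyset$. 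Hence the set $\{\chi\in B:\lambda^0=\emptyset\}$ is exactly $I^{-1}(B)$ in the $\tSym_n$, $\sa(B)=1$ case, and similarly $I^{-1}(B')$ in the other cases. By the previous paragraph this is a $p$-basic set of $B$, which is the assertion. I do not expect a genuine obstacle here: the work has been done in Theorems~\ref{thm:basicset1}, \ref{thm:basicset2} and~\ref{thm:isoparfaite}, and the only thing to be careful about is the bookkeeping of the four cases ($\tSym_n$ vs.\ $\tAlt_n$, $\sa(B)=\pm1$) and the trivial weight-zero case; the mild point worth spelling out is why a perfect isometry carries basic sets to basic sets, i.e.\ the identity $I(\widehat\chi)=\widehat{I(\chi)}$ and the fact that it restricts to a lattice isomorphism intertwining $\res$.
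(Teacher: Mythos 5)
Your proposal follows the paper's proof in all essentials: transport the $\mathcal C$- resp.\ $\mathcal D$-basic sets of Theorems~\ref{thm:basicset1} and~\ref{thm:basicset2} through the perfect isometries of Theorem~\ref{thm:isoparfaite}, using the fact that a perfect isometry intertwines $\res$ and so sends basic sets to basic sets (this is exactly what the paper cites as \cite[Prop~2.2]{BrGr}), then read off that $\lambda^{0}=\emptyset$ corresponds to a $\overline p$-quotient with empty first entry.

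Two small inaccuracies are worth flagging. First, your claim that ``the proof of Theorem~\ref{thm:isoparfaite} already produces such an isometry for any spin block of $\tSym_n$ or $\tAlt_n$'' is not quite right: that theorem is stated and proved only for spin $p$-blocks of $\tSym_n$. For the $\tAlt_n$ case the paper instead invokes \cite[Thm\,4.21]{BrGr3} to obtain a perfect isometry from the $\tAlt_n$ spin block to a spin $p$-block of some $\tSym_m$ of the same $\overline p$-weight, preserving the $\overline p$-quotient labeling, and then reduces to the $\tSym_m$ case. Your alternative of composing with a ``restriction/Clifford-theory perfect isometry'' between $\tAlt_n$ and $\tSym_n$ is not what is done and would need more care, as the Clifford relationship between spin blocks of $\tAlt_n$ and $\tSym_n$ is not a single clean isometry of the relevant form. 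Second, when $w=0$ the statement is not vacuous: $B$ then consists of one character of $p$-defect zero labeled by a $\overline p$-core, whose $\overline p$-quotient has $\lambda^0=\emptyset$, so that character does lie in the claimed basic set and the conclusion holds, easily but non-vacuously.
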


\begin{proof}
In the case of $\tSym_n$, this follows from
Theorem~\ref{thm:isoparfaite}, Theorems~\ref{thm:basicset1}
and~\ref{thm:basicset2}, and~\cite[Prop\,2.2]{BrGr} when $w>0$. If
$w=0$, then $B$ contains a single character and the result is obvious.
If $B$ is spin $p$-block of $\tAlt_n$, then \cite[Thm\,4.21]{BrGr3}
provides a perfect isometry from $B$ to a spin $p$-block of same
$\overline p$-weight $w$ of some $\tSym_m$, and which preserves the
labeling of characters.  The result then follows from the first part of
the proof and~\cite[Prop 2.2]{BrGr}. 
\end{proof}

Any spin $p$-block of $\tSym_n$ or of $\tAlt_n$ has a $p$-basic set by
Corollary~\ref{cor:basic}, and any other block has a $p$-basic set
by~\cite{BrGr}.  Hence, $\tSym_n$ and $\tAlt_n$ have $p$-basic sets,
which proves Theorem~\ref{thm:main}.

\section{Some further remarks}
\label{sec:cinq}

First notice that, by construction, the $p$-basic set $b_S$ of $\tSym_n$ we
produce is stable under tensor product by $\varepsilon$. Furthermore,
when restricted to $\tAlt_n$, it gives our $p$-basic set $b_A$ of
$\tAlt_n$, that is, the elements of $b_A$ are exactly the irreducible
constituents of the restrictions to $\tAlt_n$ of the elements of $b_S$.
\smallskip

By Remark~\ref{rk:brgrrevisited}, and using the perfect isometry
(constructed in~\cite[Thm\,5.12]{BrGr3}) between a $p$-block of $\Alt_n$
and $\Irr(H)$ with respect to $p$-regular elements of $\Alt_n$ and
$\theta(\mathcal D)$, one can directly obtain the $p$-basic set of
$\Alt_n$ constructed in~\cite[Thm 5.2]{BrGr}. The advantage of this new
method is that we need not worry about Property (2)
in~\cite[Thm\,1.1]{BrGr} which is automatically satisfied.\smallskip

If $B$ is any spin $p$-block of
$\tSym_n$ or $\tAlt_n$, then the characters in the $p$-basic set $b$ of $B$
obtained in Corollary~\ref{cor:basic} are labeled by bar partitions
$\lambda$ such that $\lambda^{(\overline
p)}=(\lambda^0=\emptyset,\lambda^1,\ldots,\lambda^{(p-1)/2})$, or,
equivalently, by the $(p-1)/2$-quotients of $w$. Furthermore, for each
such $\lambda$, one has $$\sigma(\lambda^{(\overline
p)})=(-1)^{w-\ell(\lambda^0)}=(-1)^w,$$ 
so that all quotients label two spin characters of $\tSym_n$ and one of
$\tAlt_n$ is $w$ is odd and $\sigma(B)=1$, or if $w$ is even and
$\sigma(B)=-1$, and one spin character of $\tSym_n$ and two of
$\tAlt_n$ is $w$ is odd and $\sigma(B)=-1$, or if $w$ is even and
$\sigma(B)=1$.
In particular, all the characters in $b$ are self-associate, or none is.
Since $b$ is $\varepsilon$-stable, the proof of~\cite[Prop 6.1]{BrGr}
implies that the same is true for the set of irreducible Brauer
characters in $B$, that is, all are self-associate or none is. As a
consequence, we also obtain a formula for the number of irreducible
Brauer characters in $B$. We therefore recover results of Olsson
(see~\cite[Prop (3.21) and (3.22)]{olssonNumber}) obtained by
sophisticated methods.
\medskip

We end this paper with some considerations about what we like to call
\emph{Olsson's Principle}. Roughly speaking, Olsson's Principle predicts
that, for any statement which holds for $\Sym_n$, an analogous spin statement
must hold for $\tSym_n$, even though one might have to be careful with
signs along the proof.  

We now present a few illustrations of this principle which appear
in our work.
Recall that the irreducible characters of $\sym_n$ are parametrized by
the set of partitions of $n$. For any prime $p$, a partition $\lambda$
of $n$ is uniquely determined by its $p$-core $\lambda_{(p)}$ and its
$p$-quotient $\lambda^{(p)}=(\lambda^1,\ldots,\lambda^p)$ which is a
$p$-tuple of partitions whose sizes add up to $w$, the $p$-weight of
$\lambda$. 
The celebrated \emph{Nakayama Conjecture} states that two irreducible
characters $\chi_{\lambda}$ and $\chi_{\mu}$ of $\Sym_n$ belong to the
same $p$-block if and only if $\lambda_{(p)}=\mu_{(p)}$. The spin
analogue is given by the \emph{Morris Conjecture} (Theorem~\ref{thm:maurice}).
Now, the $p$-basic set of $\Sym_n$ constructed in~\cite{BrGr} consists
of those characters which are labeled by partitions $\lambda$ with
$\lambda^{(p+1)/2}=\emptyset$, \emph{i.e.} without symmetric diagonal
$(p)$-hooks.
The spin analogue of this is the set of bar partitions $\lambda$ of $n$
such that $\lambda^{0}=\emptyset$, \emph{i.e.} without parts divisible
by $p$. The link between these two properties is enlightened by the
\emph{doubling} construction presented in~\cite{Morris-Yaseen}. Via this
process, a bar partition $\lambda$ of $n$ is associated to a partition
$D(\lambda)$ of $2n$ such that $\lambda^{0}=\emptyset$ if and only if
$D(\lambda)^{(p+1)/2}=\emptyset$.

The way the irreducible spin characters of $\widetilde G$ can be labeled
by $\overline p$-quotients of $w$, just like the irreducible characters
of $G$ are labeled by $p$-quotients of $w$ is another example of
Olsson's Principle.
It should therefore not come as a surprise that the perfect isometry of
Theorem~\ref{thm:isoparfaite} (which associates spin characters of
$\tSym_n$ and $\widetilde G$ corresponding to the same $\overline
p$-quotient of $w$), is precisely the spin analogue of the perfect
isometry of~\cite[Thm\,3.6]{BrGr} (which associates characters of
$\Sym_n$ and $G$ corresponding to the same $p$-quotient of $w$), 
including the sign $\delta_{\overline
p}(\lambda)(-1)^{|\lambda^0|}$ which is the spin analogue of
$\delta_p(\lambda)(-1)^{|\lambda^{(p+1)/2}|}$.

By the same token, the basic set of $\SI(\widetilde G)$ we construct
in Theorem~\ref{thm:basicset1} is the spin analogue of that of $G$ given
in~\cite[Thm\,4.3]{BrGr}. 

It should be noted that, while Olsson's Principle can be an inspiration
for results about $\tSym_n$, it is by no means a proof. However, it can
be a useful guide to indicate what the proof should be. This is indeed
the narrative behind the sequence given by~\cite{BrGr3}, \cite{livesey} and
the present paper to generalize to the spin case the proof
of~\cite{BrGr}, whose result's spin analogue was inspired to the authors
by Olsson's Principle several years ago.

\bigskip

{\bf Acknowledgements.} Part of this work was done at the CIRM in Luminy 
during a research in pairs stay. The authors wish to thank the CIRM gratefully
for their financial and logistical support. The first author is
supported by Agence Nationale de la Recherche Projet ACORT
ANR-12-JS01-0003.
The second author also acknowledges financial support from the
Engineering and Physical Sciences Research Council grant \emph{Combinatorial Representation Theory} EP/M019292/1. 

 \bibliographystyle{abbrv}
\bibliography{references}

\end{document}